\theoremstyle{thmstyleone}%
\newtheorem{theorem}{Theorem}%  meant for continuous numbers
\theoremstyle{thmstyletwo}%
\newtheorem{example}{Example}%
\newtheorem{counterexample}{Counterexample}%
\newtheorem{remark}{Remark}%
\newtheorem{question}{Question}
\theoremstyle{thmstylethree}%
\newtheorem{definition}{Definition}%
\newtheorem{lemma}{Lemma}%
\newtheorem{corollary}{Corollary}%
\begin{document}

\title[Value distribution of certain differential polynomials...]{Value distribution of certain differential polynomials leading to some normality criteria}

%%=============================================================%%
%% Prefix	-> \pfx{Dr}
%% GivenName	-> \fnm{Joergen W.}
%% Particle	-> \spfx{van der} -> surname prefix
%% FamilyName	-> \sur{Ploeg}
%% Suffix	-> \sfx{IV}
%% NatureName	-> \tanm{Poet Laureate} -> Title after name
%% Degrees	-> \dgr{MSc, PhD}
%% \author*[1,2]{\pfx{Dr} \fnm{Joergen W.} \spfx{van der} \sur{Ploeg} \sfx{IV} \tanm{Poet Laureate} 
%%                 \dgr{MSc, PhD}}\email{iauthor@gmail.com}
%%=============================================================%%

\author[1]{\fnm{Kuldeep Singh} \sur{Charak}}\email{kuldeepcharak65@gmail.com\\
Orcid: 0000-0003-0151-3249}

\author*[1]{\fnm{Nikhil} \sur{Bharti}}\email{nikhilbharti94@gmail.com\\
Orcid: 0000-0003-2501-6247}
%\equalcont{These authors contributed equally to this work.}

\affil[1]{\orgdiv{Department of Mathematics}, \orgname{University of Jammu}, \orgaddress{\city{Jammu}, \postcode{180006}, \state{Jammu and Kashmir}, \country{India}}}

%\affil[3]{\orgdiv{Department}, \orgname{Organization}, \orgaddress{\street{Street}, \city{City}, \postcode{610101}, \state{State}, \country{Country}}}

%%==================================%%
%% sample for unstructured abstract %%
%%==================================%%

\abstract{In this paper, we prove some normality criteria concerning transitivity of normality from one family of meromorphic functions to another which improve and generalize some recent results. We also prove some value distribution results for certain differential polynomials which lead to some normality criteria involving sharing of holomorphic functions with certain differential polynomials. As a consequence, a counterexample to the converse of the Bloch's principle is also given.}

\keywords{Meromorphic functions. Normal families. Value distribution theory. Differential polynomials. Shared functions.}

%%\pacs[JEL Classification]{D8, H51}

\pacs[MSC Classification]{Primary 30D35. 30D45. 30M04; Secondary 30D30.}

\maketitle
\section{Introduction and Statement of Results}
For the sake of convenience, we shall denote by  $\mathcal{H}(D),$ the class of all holomorpic functions on the domain $D$ in $\mathbb{C}$ and by $\mathcal{M}(D),$ the class of all meromorphic functions on the domain $D.$ $\mathbb{D}$  and $D(a,r)$ shall denote the open unit disk and the open disk  with center $a$ and radius $r,$ in $\mathbb{C}$, respectively. We assume that the reader is familiar with the standard notations of the Nevanlinna value distribution theory of meromorphic functions, like $m(r,f),~N(r,f),~T(r,f)$ (see \cite{hayman, lo}). 

\medskip
 
A family $\mathcal{F}\subset\mathcal{M}(D)$ is said to be {\it normal} in $D$ if every sequence of functions in $\mathcal{F}$ has a subsequence which converges locally uniformly in $D$ with respect to the spherical metric. The limit function is either meromorphic in $D$ or identically $\infty.$ If $\mathcal{F}$ happens to be a family of holomorphic functions, then the Euclidean metric can be taken in place of the spherical metric, and in this case, the limit function is either holomorphic in $D$ or identically $\infty$ (see \cite{schiff, zalcman-1, zalcman-2}). 

\smallskip

Let $k$ be a positive integer and let $n_0, n_1,\ldots, n_k$ be non-negative integers,  not all zero. Let $f\in\mathcal{M}(D).$ Then the expression of the form $$M[f]:=a\cdot\prod\limits_{j=0}^{k}\left(f^{(j)}\right)^{n_j}$$ is called a differential monomial of $f,$ where $a~(\not\equiv 0, \infty)\in\mathcal{M}(D).$ If $a\equiv 1,$ then $M[f]$ is said to be a normalized differential monomial of $f.$ The quantities $$\gamma_M:=\sum\limits_{j=0}^{k}n_j \mbox{ and } \Gamma_M:=\sum\limits_{j=0}^{k}(j+1)n_j$$ are called the degree and weight of the monomial $M[f],$ respectively. 

For $1\leq i\leq m,$ let $M_i[f]=\prod\limits_{j=0}^{k}\left(f^{(j)}\right)^{n_{ji}}$  be $m$ differential monomials of $f.$ Then the sum $P[f]:= \sum\limits_{i=1}^{m}a_iM_i[f]$ is called a differential polynomial of $f$ and the quantities $\gamma_P:=\max\left\{\gamma_{M_i}: 1\leq i\leq m\right\},$ $\nu_P:=\min\left\{\gamma_{M_i}: 1\leq i\leq m\right\}$ and $\Gamma_P:=\max\left\{\Gamma_{M_i}:1\leq i\leq m\right\}$ are the degree, lower degree and weight of the differential polynomial $P[f],$ respectively. The number $k,$ which is the highest order of derivative occurring in the differential polynomial $P[f]$ shall be called  differential order of $P[f].$ Further, we shall denote by $\Theta_P,$ the ratio of the weight of the differential polynomial to its lower degree. That is, 
$$\Theta_P=\frac{\Gamma_P}{\nu_P}=\max\left\{\frac{\Gamma_{M_i}}{\gamma_{M_i}}:1\leq i\leq m\right\}.$$

\medskip

In the present paper, where not otherwise stated, we consider the differential polynomials of the form 
\begin{equation}\label{eqn:1}
P[f]=\sum\limits_{i=1}^{m}a_i~M_i[f],
\end{equation}
where $$M_i[f]=\prod\limits_{j=0}^{k}\left(f^{(j)}\right)^{n_{ji}},~1\leq i\leq m~$$ are normalized differential monomials satisfying 
\begin{equation}\label{eqn:2}
\frac{\Gamma_{M_t}}{\gamma_{M_t}}<\frac{\Gamma_{M_1}}{\gamma_{M_1}}=\Theta_P ~\mbox{ for }~ 2\leq t\leq m,
\end{equation}
and the coefficients $a_{i} \in \mathcal{H}(D)$  with $a_1(z)\neq 0.$
\begin{definition}
Let $f~g\in\mathcal{M}(D).$ Then $f$ and $g$ are said to share the function $h$ in $D$ if $Z(h,f)=Z(h,g),$ where $Z(h, \psi):=\left\{\zeta\in D: \psi(\zeta)-h(\zeta)=0\right\}$ is the set of zeros of $\psi-h$ in $D$ counted with ignoring multiplicities; if the zeros are counted with their due multiplicities, then we say that  $f$ and $g$ share $h$ with counting multiplicities and we write this as $f$ and $g$ share $h$ CM. If $Z(h,f)\subseteq Z(h,g),$ then we say that $f$ and $g$ partially share $h$ in $D.$
\end{definition}

In $2013,$ Liu et al. \cite{liu} initiated the study on transitivity of normality from one family of meromorphic functions to another under suitable conditions and obtained the following results:
\begin{theorem}\label{thm:liu-1}
    Let $\mathcal{F},~\mathcal{G}\subset\mathcal{M}(D)$ and $a_i~(i=1,2,3,4)$ be four distinct complex numbers. Assume that $\mathcal{G}$ is normal in $D.$ If for every $f\in\mathcal{F},$ there exists $g\in\mathcal{G},$ such that $f$ and $g$ share $a_i~(i=1,2,3,4)$  in $D$ , then $\mathcal{F}$ is normal in $D.$
\end{theorem}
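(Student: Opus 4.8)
The plan is to exploit two standard facts: normality is a \emph{local} property, and Montel's fundamental normality test guarantees that a family of meromorphic functions omitting three distinct values on a domain is normal there. Thus it suffices to prove that $\mathcal{F}$ is normal at each point $z_0\in D$. I would fix such a $z_0$, take an arbitrary sequence $\{f_n\}\subset\mathcal{F}$, and for each $n$ select, by hypothesis, a function $g_n\in\mathcal{G}$ that shares $a_1,a_2,a_3,a_4$ with $f_n$. Since $\mathcal{G}$ is normal in $D$, I pass to a subsequence (not relabelled) so that $g_n\to g$ locally uniformly with respect to the spherical metric, where $g$ is either meromorphic on $D$ or $g\equiv\infty$.

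The key observation is a pigeonhole argument: the four sharing values are distinct points of the Riemann sphere, whereas $g(z_0)$ is a single point (allowing $g(z_0)=\infty$ when $g$ has a pole at $z_0$ or when $g\equiv\infty$). Hence $g(z_0)$ coincides with at most one of the $a_i$, so there exist three indices, say giving values $a_j,a_k,a_l$, with $g(z_0)\notin\{a_j,a_k,a_l\}$. Because $g$ is spherically continuous and $g(\overline{U})$ is a compact subset of the sphere avoiding these three points on a small closed neighborhood $\overline{U}$ of $z_0$, the function $g$ stays spherically bounded away from $a_j,a_k,a_l$ on $\overline{U}$ by some $\delta>0$. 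Uniform spherical convergence then forces $g_n\neq a_j,a_k,a_l$ on $U$ for all large $n$; this is the Hurwitz-type step that transfers the omitted-value property from the limit $g$ to the approximants $g_n$.

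Finally I would invoke the sharing hypothesis. Since $f_n$ and $g_n$ share each $a_i$, the equality $Z(a_i,f_n)=Z(a_i,g_n)$ shows that $f_n$ omits $a_j,a_k,a_l$ on $U$ precisely where $g_n$ does; hence $\{f_n\}$ omits the three distinct values $a_j,a_k,a_l$ on $U$ for all large $n$. By Montel's theorem this family is normal on $U$, so it admits a further subsequence converging locally uniformly near $z_0$, which is a convergent subsequence of the original sequence. As $\{f_n\}$ was arbitrary, $\mathcal{F}$ is normal at $z_0$, and since $z_0$ was arbitrary, $\mathcal{F}$ is normal in $D$.

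I expect the main obstacle to be the careful execution of the Hurwitz-type transfer step, in particular verifying that the three chosen values remain omitted by $g_n$ on a full neighborhood of $z_0$ for large $n$, and handling uniformly the situations where $g$ is finite, has a pole at $z_0$, or is identically $\infty$. The decisive structural feature is that \emph{four} values (rather than three) are shared: the extra value provides exactly the slack needed so that, whatever single value $g(z_0)$ happens to be, three of the sharing values are still available to feed into Montel's theorem.
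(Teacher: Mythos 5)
Your argument is correct, but note that the paper does not prove this statement at all: it is quoted as Theorem~1 of Liu, Li and Pang (reference \cite{liu}) purely as background. Your proof — pass to a convergent subsequence $g_n\to g$ in the normal family $\mathcal{G}$, observe that $g(z_0)$ (possibly $\infty$) can coincide with at most one of the four finite values $a_i$, push the three remaining omitted values from $g$ to $g_n$ on a small neighborhood via uniform spherical convergence, transfer them to $f_n$ by the sharing hypothesis, and finish with Montel's fundamental normality test — is essentially the standard proof of that cited result, and all the steps (including the handling of $g\equiv\infty$ or a pole at $z_0$, and the fact that the three chosen values are fixed once the subsequence is fixed) check out.
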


\begin{theorem}\label{thm:liu-2}
Let $\mathcal{F},~\mathcal{G}\subset\mathcal{M}(D)$ be two families, all of whose zeros have multiplicities at least $k+1,~k\in\mathbb{N}.$ Let $a$ be a non-zero complex number. Assume that the family $\mathcal{G}$ is normal in $D$ such that no subsequence of $\mathcal{G}$ converges spherically locally uniformly to $\infty$ or to a function $g$ satisfying $g^{(k)}\equiv a.$  If for every $f\in\mathcal{F},$ there exists $g\in\mathcal{G}$ such that $f$ and $g$ share $0$ and $\infty,$ and $f^{(k)}$ and $g^{(k)}$ share $a$ CM, then $\mathcal{F}$ is normal in $D.$  
\end{theorem}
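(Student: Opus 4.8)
The plan is to argue by contradiction via the Pang–Zalcman rescaling lemma, coupling it to the normality of $\mathcal{G}$ so that the whole problem collapses to a value-distribution statement on $\mathbb{C}$. Suppose $\mathcal{F}$ is not normal at some $z_0\in D$. Since every member of $\mathcal{F}$ has zeros of multiplicity at least $k+1>k$, the Pang–Zalcman lemma applied with scaling exponent $k$ furnishes $f_n\in\mathcal{F}$, points $z_n\to z_0$ and radii $\rho_n\to 0^+$ such that $g_n(\zeta):=\rho_n^{-k}f_n(z_n+\rho_n\zeta)$ converges spherically locally uniformly on $\mathbb{C}$ to a nonconstant meromorphic function $g$ of finite order, all of whose zeros still have multiplicity $\geq k+1$. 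The exponent $k$ is chosen precisely so that $g_n^{(k)}(\zeta)=f_n^{(k)}(z_n+\rho_n\zeta)\to g^{(k)}(\zeta)$, which is what links the rescaling to the shared value $a$.

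Next I would bring in $\mathcal{G}$. For each $n$ pick $G_n\in\mathcal{G}$ sharing $0,\infty$ with $f_n$ and with $f_n^{(k)},G_n^{(k)}$ sharing $a$ CM. By normality of $\mathcal{G}$, after passing to a subsequence $G_n\to G$ locally uniformly; by hypothesis $G\not\equiv\infty$ and $G^{(k)}\not\equiv a$, and since every $G_n$ has zeros of multiplicity $\geq k+1$, so does $G$. The core step is to read off constraints on $g$ from the three sharing conditions tested against $G$ at the single point $z_0$. Because $\rho_n\to 0$, each fixed disk $|\zeta|\le R$ is carried into an arbitrarily small neighbourhood of $z_0$; combining this with Hurwitz's theorem and the CM-sharing of $a$ shows that the number of zeros of $g^{(k)}-a$ in $|\zeta|\le R$ is bounded by the multiplicity of $z_0$ as a zero of $G^{(k)}-a$ (and equals $0$ when $G^{(k)}(z_0)\neq a$). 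Hence $g^{(k)}-a$ has only finitely many zeros in $\mathbb{C}$, and the same Hurwitz argument against the shared pole-set shows $g$ has only finitely many poles. The decisive use of the multiplicity hypothesis on the \emph{target} family is this: if $g$ had a zero, the shared zeros would force $G(z_0)=0$, so $G$ has a zero of multiplicity $\geq k+1$ there and $G^{(k)}(z_0)=0\neq a$; the dichotomy above would then make $g^{(k)}$ omit $a$, which is impossible by the classical Hayman-type theorem \cite{hayman} for a nonconstant $g$ all of whose zeros have multiplicity $\geq k+1$. Therefore $g$ must be zero-free.

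It then remains to contradict the existence of a zero-free, nonconstant, finite-order meromorphic $g$ with finitely many poles for which $g^{(k)}-a$ has finitely many zeros. If $g$ had a pole, the shared-pole relation would force $G(z_0)=\infty$, hence $G^{(k)}(z_0)\neq a$, so by the dichotomy $g^{(k)}$ would \emph{omit} $a$; but a nonconstant rational function is surjective (so it cannot omit $a$), and a zero-free transcendental $g$ with finitely many poles is excluded by Hayman's inequality $T(r,g)\le\overline N(r,g)+N(r,1/g)+N(r,1/(g^{(k)}-a))+S(r,g)$, whose right-hand side is $S(r,g)$ here. Thus $g$ is also pole-free, i.e. $g=e^{Q}$ with $Q$ a nonconstant polynomial; but then $g^{(k)}=R\,e^{Q}$ takes the value $0$ only finitely often, so $0$ is its sole Picard-exceptional value and $g^{(k)}$ assumes the nonzero value $a$ infinitely often, contradicting the finiteness of the zeros of $g^{(k)}-a$. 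As every case is impossible, $\mathcal{F}$ is normal at $z_0$, hence in $D$.

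I expect the main obstacle to be the middle step: pushing the three sharing conditions through the Pang–Zalcman limit so that they distil into the two clean facts ``$g^{(k)}-a$ has finitely many zeros'' and ``$g$ is zero-free.'' The delicate point is that all information about $\mathcal{G}$ is concentrated at the single point $z_0$ while the information about $g$ is spread over all of $\mathbb{C}$, so one must control, through Hurwitz's theorem and the rate $\rho_n\to 0$, exactly which $a$-points and poles of $g_n$ survive in the limit. It is precisely the multiplicity-$\geq k+1$ hypothesis imposed on the target family $\mathcal{G}$ (not merely on $\mathcal{F}$) that forces $G^{(k)}(z_0)=0$ whenever $G(z_0)=0$, thereby eliminating the otherwise dangerous polynomial-type limit functions.
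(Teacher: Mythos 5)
The paper does not actually prove this statement: Theorem~\ref{thm:liu-2} is quoted as background from Liu, Li and Pang \cite{liu}, so there is no in-paper proof to compare against; your skeleton (Zalcman--Pang rescaling with exponent $k$, coupling to a limit $G$ of the partner family, Hurwitz transfer of zeros, poles and $a$-points of the $k$-th derivative to the single point $z_0$) is the same one the authors use for their own Theorems~\ref{thm:2} and~\ref{thm:3}, and the counting ``the zeros of $g^{(k)}-a$ in $|\zeta|\le R$ are at most the multiplicity of $z_0$ as a zero of $G^{(k)}-a$'' is sound when $z_0$ is not a pole of $G$. But two steps fail as written. First, the ``classical Hayman-type theorem'' you invoke --- that a nonconstant meromorphic $g$ all of whose zeros have multiplicity $\ge k+1$ cannot have $g^{(k)}$ omitting a nonzero value --- is false: for $k=1$ the function $g(z)=a(z-r)^2/(z-p)$ with $r\ne p$ has only a double zero, yet $g'-a=-a(r-p)^2/(z-p)^2$ never vanishes. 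Your zero-branch is repairable, but only by an argument you do not make: $g$ having a zero forces $G(z_0)=0$, hence $G(z_0)\ne\infty$, hence (via the shared poles) $g$ is pole-free, and for an \emph{entire} finite-order $g$ with zeros of multiplicity $\ge k+1$ the omission of $a$ really is impossible (Lemma~\ref{lem:w1} in the transcendental case, surjectivity of the nonconstant polynomial $g^{(k)}$ otherwise).

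The more serious gap is the pole case. When $G(z_0)=\infty$, the implication ``$G^{(k)}(z_0)\ne a$ $\Rightarrow$ $g^{(k)}$ omits $a$'' breaks down: $G^{(k)}-a$ has a pole at $z_0$, and if $G_n$ has $q_n\ge 2$ distinct poles merging at $z_0$ (total order $p$), the argument principle gives $G_n^{(k)}-a$ exactly $(q_n-1)k>0$ zeros in any small neighbourhood of $z_0$. So in this branch you obtain only ``$g^{(k)}-a$ has finitely many zeros,'' which is no contradiction for a zero-free nonconstant rational $g$ with poles --- for such $g$ that finiteness is automatic, and the available upper bound $(p-1)k$ on the number of these zeros is not below the lower bound $k+1$ furnished by results such as Lemma~\ref{lem:1} once $p\ge 3$. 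This is precisely the hard case of the theorem, and it is where the CM hypothesis must carry real weight; your proof mentions CM only in passing and never uses it where it is needed. That this case cannot be waved away is corroborated by the surrounding literature: dropping CM required Chang's complex-dynamics argument for $k=1$ \cite{chang-2} and the extra pole-multiplicity hypothesis of Chen and Xu \cite{chen-xu} for general $k$.
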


Theorem \ref{thm:liu-1} has been improved where the four distinct shared values are replaced by four distinct shared holomorphic functions (see \cite[Theorem 1]{chen-xu}) and also by four uniformly separated proximate 
values which may depend on each $f\in\mathcal{F}$ (see \cite[Theorem 7]{kumar}). On the other hand, by a beautiful application of complex dynamics, Chang \cite[Theorem 1.4]{chang-2} established that for $k=1,$ the condition ``$f^{(k)}$ and $g^{(k)}$ share $a$ CM" in Theorem \ref{thm:liu-2} can be replaced by the condition ``$f^{(k)}$ and $g^{(k)}$ share $a$". Following the ideas of Chang \cite{chang-2}, Chen and Xu \cite[Theorem 2]{chen-xu} improved Theorem \ref{thm:liu-2} as:

\begin{theorem}\label{thm:chen-xu}
    Let $\mathcal{F},~\mathcal{G}\subset\mathcal{M}(D)$ be two families, all of whose zeros have multiplicities at least $k+1,~k\in\mathbb{N}.$ Let $a$ be a non-zero complex number. Assume that the family $\mathcal{G}$ is normal in $D$ such that no subsequence of $\mathcal{G}$ converges spherically locally uniformly to $\infty$ or to a function $g$ satisfying $g^{(k)}\equiv a,$ and poles of $f\in\mathcal{F}$ have multiplicities at least $k.$  If for every $f\in\mathcal{F},$ there exists $g\in\mathcal{G}$ such that $f$ and $g$ partially share $0,$ $f$ and $g$ share $\infty,$ and $f^{(k)}$ and $g^{(k)}$  partially share $a,$ then $\mathcal{F}$ is normal in $ D.$
\end{theorem}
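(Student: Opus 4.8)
The plan is to argue by contradiction using the Zalcman--Pang rescaling lemma, transfer the three sharing hypotheses to limit functions via Hurwitz's theorem, and then derive a contradiction from a Hayman-type value distribution estimate. Suppose $\mathcal{F}$ fails to be normal at some $z_0\in D$. Since every $f\in\mathcal{F}$ has zeros of multiplicity at least $k+1$, the Zalcman--Pang lemma applied with rescaling exponent $\alpha=k$ (admissible precisely because $k<k+1$) produces functions $f_n\in\mathcal{F}$, points $z_n\to z_0$ and numbers $\rho_n\to 0^+$ with
\[
F_n(\zeta):=\rho_n^{-k}\,f_n(z_n+\rho_n\zeta)\longrightarrow F(\zeta)
\]
spherically locally uniformly on $\mathbb{C}$, where $F$ is non-constant meromorphic, its zeros have multiplicity at least $k+1$, and its poles have multiplicity at least $k$ (the pole bound surviving the limit thanks to the hypothesis on poles of $f\in\mathcal{F}$). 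The virtue of the choice $\alpha=k$ is that $F_n^{(k)}(\zeta)=f_n^{(k)}(z_n+\rho_n\zeta)\to F^{(k)}(\zeta)$, so the shared value $a$ is preserved exactly under rescaling.

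Next I would bring in the normality of $\mathcal{G}$. For each $f_n$ choose the associated $g_n\in\mathcal{G}$ supplied by the hypothesis; after passing to a subsequence, $g_n\to\psi$ spherically locally uniformly on $D$, and the two exclusions imposed on $\mathcal{G}$ guarantee $\psi\not\equiv\infty$ and $\psi^{(k)}\not\equiv a$. The heart of the argument is to read the sharing relations off in the limit: a zero of $F$ forces $\psi(z_0)=0$ (partial sharing of $0$), a pole of $F$ forces $z_0$ to be a pole of $\psi$ (sharing of $\infty$), and, most importantly, every zero $\zeta_0$ of $F^{(k)}-a$ produces points $w_n=z_n+\rho_n\zeta_n\to z_0$ with $g_n^{(k)}(w_n)=a$ (partial sharing of $a$). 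Because $\rho_n\to 0^+$, all these transferred $a$-points of $g_n^{(k)}$ accumulate at the single point $z_0$.

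The contradiction mechanism I have in mind is a counting one. Fix a small disc about $z_0$ on whose boundary $\psi^{(k)}-a$ does not vanish; since $\psi^{(k)}\not\equiv a$, Hurwitz's theorem caps the number of zeros of $g_n^{(k)}-a$ in that disc, for large $n$, by the finite local multiplicity $M$ of $\psi^{(k)}-a$ at $z_0$. On the other hand, distinct zeros $\zeta_0^{(1)},\zeta_0^{(2)},\dots$ of $F^{(k)}-a$ transfer to distinct zeros $w_n^{(1)},w_n^{(2)},\dots$ of $g_n^{(k)}-a$ in that disc, and as $n\to\infty$ more and more of them enter it (the admissible range $|\zeta_0^{(j)}|\lesssim\delta/\rho_n$ expands without bound). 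Hence $F^{(k)}-a$ can have only finitely many zeros. If $F$ were transcendental, the multiplicity bound on its zeros together with Hayman's theorem would force $F^{(k)}-a$ to have infinitely many zeros---a contradiction---so $F$ must in fact be rational.

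The rational case is where I expect the real difficulty to lie. There $F^{(k)}-a$ has only finitely many zeros, so the simple counting contradiction no longer closes; instead one must exploit the precise structure forced by the multiplicity constraints, namely that all zeros of $F$ have multiplicity at least $k+1$ and all poles multiplicity at least $k$. The plan is to combine the lower bound on the number of distinct zeros of $F^{(k)}-a$ furnished by a rational-function value distribution lemma with the exact value of $M$, which the argument principle ties to the local degree of $F^{(k)}-a$ through the rescaling, and to show these cannot be reconciled. Separately treating the sub-case in which $z_0$ is a pole of $\psi$---using the $\infty$-sharing together with the pole-multiplicity hypothesis to control $g_n^{(k)}-a$ near $z_0$---and matching the zero counts with $M$ is the crux of the argument; by contrast, the rescaling and the Hurwitz transfers are comparatively routine once the exponent $\alpha=k$ has been fixed.
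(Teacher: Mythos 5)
First, a caveat: the paper does not prove this statement at all --- Theorem~\ref{thm:chen-xu} is quoted verbatim from Chen and Xu as background, so there is no in-paper proof to compare against. The closest internal analogues are the proofs of Theorems~\ref{thm:2} and~\ref{thm:3}, which run on exactly the machinery you propose (Zalcman--Pang with exponent $\alpha=\Theta_P-1$, which for $P[f]=f^{(k)}$ is your $\alpha=k$; Hurwitz transfer of the sharing relations; a counting contradiction against the finite local multiplicity of $P[g]-a$ at $z_0$). So your skeleton is the right one and matches the paper's method for its own results.

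The proposal nevertheless has two genuine gaps. The first is the rational case, which you explicitly leave open with only a vague plan; it is not a residual technicality but the place where your organization of the argument breaks down. The way the paper's proof of Theorem~\ref{thm:3} avoids it is to reverse the order of deductions: one first shows that the Zalcman limit $F$ \emph{omits} $0$ (a zero of $F$ would force $\psi(z_0)=0$ with multiplicity $\geq k+1$, hence $\psi^{(k)}(z_0)=-a+a=0\neq a$... more precisely $\psi^{(k)}(z_0)=0$, so $g_n^{(k)}-a$ is zero-free near $z_0$, so by the transfer $F^{(k)}$ omits $a$, which Lemma~\ref{lem:w1} and the polynomial case both forbid for an entire $F$ with high-multiplicity zeros, while a pole of $F$ would force the incompatible $\psi(z_0)=\infty$), and then that $F$ has \emph{no poles}; a non-constant zero-free entire function is transcendental, so the rational case never arises and the transcendental counting argument closes the proof. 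The second gap is your treatment of the sub-case $\psi(z_0)=\infty$: you assert that the transferred $a$-points of $g_n^{(k)}$ accumulating at a pole of $\psi$ give a contradiction, but spherical convergence $g_n\to\psi$ does not prevent $g_n^{(k)}-a$ from vanishing near a pole of $\psi$ --- the pole of $\psi$ can split into several distinct poles of $g_n$, and an argument-principle count shows $g_n^{(k)}-a$ then acquires roughly $k(\text{number of poles}-1)$ zeros there. This is precisely where the hypotheses that poles of $f\in\mathcal{F}$ have multiplicity at least $k$ and that $f$ and $g$ share $\infty$ in both directions must be brought to bear; your write-up records the pole-multiplicity hypothesis when stating the properties of $F$ but never actually uses it, which is a sign that the decisive step is missing rather than merely deferred.
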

Recently, Ahamed and Mandal \cite[Theorem 2.2]{ahamed} considered families of holomorphic functions and generalized Theorem \ref{thm:liu-2} to differential monomials as follows:
 
\begin{theorem}\label{thm:mola}
Let $\mathcal{F},~\mathcal{G}\subset\mathcal{H}(D)$ be two families, all of whose zeros have multiplicities at least $k+1,~k\in\mathbb{N}.$ Let $a$ be a non-zero complex number. Assume that the family $\mathcal{G}$ is normal in $D$ such that for any subsequence $g_n$ of $\mathcal{G},~g_n\rightarrow g,~g\not\equiv\infty$ and $M[g]\not\equiv a.$ If for every $f\in\mathcal{F},$ there exists $g\in\mathcal{G}$ such that $f$ and $g$ share $0,$ and $M[f]$ and $M[g]$ share $a,$ then $\mathcal{F}$ is normal in $D.$  
\end{theorem}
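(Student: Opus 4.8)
The plan is to argue by contradiction through a Zalcman--Pang rescaling of $\mathcal{F}$, transferring both sharing hypotheses to the rescaled limit via Hurwitz's theorem, and closing with a Hayman-type value-distribution statement for $M[\,\cdot\,]$. Since normality is local, I would assume $\mathcal{F}$ is not normal at some $z_0\in D$. I would apply the Pang--Zalcman lemma (cf.\ \cite{zalcman-2}) to $\mathcal{F}$ with the \emph{specific} exponent $\alpha=\Gamma_M/\gamma_M-1$. This lies in the admissible range $\alpha<k+1$ for a holomorphic family whose zeros have multiplicity $\ge k+1$, because $1\le \Gamma_M/\gamma_M\le k+1$. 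It produces $f_n\in\mathcal{F}$, $z_n\to z_0$ and $\rho_n\to 0^+$ with $F_n(\zeta):=\rho_n^{-\alpha}f_n(z_n+\rho_n\zeta)\to F(\zeta)$ locally uniformly on $\mathbb{C}$, where $F$ is a nonconstant entire function whose zeros have multiplicity $\ge k+1$. The point of this choice of $\alpha$ is that the $\rho_n$-powers cancel in the monomial, giving the exact identity $M[F_n](\zeta)=M[f_n](z_n+\rho_n\zeta)$, hence $M[F_n]\to M[F]$ locally uniformly. Taking the partners $g_n\in\mathcal{G}$ guaranteed for these $f_n$ and invoking normality of $\mathcal{G}$, I would pass to a subsequence with $g_n\to\psi$ locally uniformly, $\psi$ holomorphic and (by hypothesis) $M[\psi]\not\equiv a$; then $g_n^{(j)}\to\psi^{(j)}$ and $M[g_n]\to M[\psi]$ near $z_0$.

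Next I would transfer the two sharing conditions to $F$. From the shared zeros: if $F(\zeta_0)=0$, Hurwitz yields zeros of $f_n$ tending to $z_0$, which are zeros of $g_n$, so $\psi(z_0)=0$. From the shared value $a$ (assuming for now $M[F]\not\equiv a$): every zero $\zeta_0$ of $M[F]-a$ produces, via Hurwitz applied to $M[F_n]\to M[F]$, genuine $a$-points of $M[f_n]$ clustering at $z_0$; by the sharing these are $a$-points of $M[g_n]$, whence $M[\psi](z_0)=a$. The \emph{crucial} quantitative consequence is that, since $M[\psi]\not\equiv a$, the number of distinct $a$-points of $M[g_n]$ in a small disc about $z_0$ is eventually bounded by $\mu:=\mathrm{ord}_{z_0}(M[\psi]-a)<\infty$. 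Comparing counts on discs $|\zeta|<R$ and letting $R\to\infty$ forces $M[F]-a$ to have at most $\mu$ distinct zeros in all of $\mathbb{C}$; in particular $M[F]$ has only finitely many $a$-points (with the convention $\mu=0$, i.e.\ $M[F]$ omits $a$, when $M[\psi](z_0)\neq a$).

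To finish I would contradict this finiteness. If $F$ is transcendental, then $M[F]$ is transcendental (so $M[F]\not\equiv a$), and the Hayman-type theorem (cf.\ \cite{hayman})---that the monomial of a transcendental entire function whose zeros have multiplicity $\ge k+1$ assumes every nonzero value infinitely often---contradicts the finiteness just obtained. If $F$ is a polynomial, then being nonconstant it has a zero, necessarily of multiplicity $\ge k+1$, so $\deg M[F]=\gamma_M\deg F-(\Gamma_M-\gamma_M)\ge\gamma_M\ge1$ and $M[F]$ is a nonconstant polynomial. By the shared-zero transfer, $\psi(z_0)=0$; and since the zeros of the $g_n$ have multiplicity $\ge k+1$, the zero of $\psi$ at $z_0$ has multiplicity $\ge k+1$ (the case $\psi\equiv 0$ being handled the same way), so $\psi^{(j)}(z_0)=0$ for all $0\le j\le k$ and thus $M[\psi](z_0)=0\neq a$. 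Hence $\mu=0$, and the transfer fact says $M[F]$ omits $a$---impossible for a nonconstant polynomial. Either way we reach a contradiction, so $\mathcal{F}$ is normal in $D$. I expect the main obstacle to be the value-distribution input in the transcendental case (that $M[F]-a$ has infinitely many zeros), which is precisely the type of result on differential polynomials one must establish beforehand; the secondary delicate point is the uniform Hurwitz bookkeeping that converts the pointwise sharing into the global finiteness bound on the $a$-points of $M[F]$.
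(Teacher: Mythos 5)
Your proposal is correct and follows essentially the same route as the paper's argument (which proves the more general Theorem \ref{thm:2} and obtains this statement as the single-monomial special case): the Zalcman--Pang rescaling with the exponent $\Gamma_M/\gamma_M-1$ so that $M[F_n](\zeta)=M[f_n](z_n+\rho_n\zeta)$, the Hurwitz transfer of both sharing conditions to the limits, the value-distribution input for monomials of entire functions with multiple zeros (supplied in the paper by Lemma \ref{lem:3} together with Lemma \ref{lem:w1}), and the final comparison of the number of distinct $a$-points of $M[g_n]$ near $z_0$ with $\mathrm{ord}_{z_0}(M[\psi]-a)$. The only cosmetic difference is that the paper first shows the rescaled limit omits $0$ (hence is transcendental) before invoking the value-distribution lemma, whereas you fold that step into a separate polynomial case; both variants rest on the same ingredients.
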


About Theorem \ref{thm:mola}, it is natural to ask the following question:

\begin{question}\label{q:1}
Is the family $\mathcal{F}$ normal in $D$ if the differential monomials $M[f]$ and $M[g]$ sharing a non-zero complex number $a$ are replaced by some differential polynomials partially  sharing $a$ in $D$? 
\end{question}

We answer Question \ref{q:1} as follows:

\begin{theorem}\label{thm:2}
Let $\mathcal{F},~\mathcal{G}\subset\mathcal{H}(D)$ be two families, all of whose zeros have multiplicities at least $k+1,~k\in\mathbb{N}.$ Let $P[f]$ be a differential polynomial defined in \eqref{eqn:1} and satisfying \eqref{eqn:2}. Let $a$ be a non-zero complex number. Assume that the family $\mathcal{G}$ is normal in $D$ such that for any subsequence $\left\{g_n\right\}\subset\mathcal{G},~g_n\rightarrow g,~g\not\equiv\infty$ and $P[g]\not\equiv a.$ If for every $f\in\mathcal{F},$ there exists $g\in\mathcal{G}$ such that $f$ and $g$ partially share $0$, and,  $P[f]$ and $P[g]$ partially share $a$ in $D,$ then $\mathcal{F}$ is normal in $D.$
\end{theorem}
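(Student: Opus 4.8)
The plan is to argue by contradiction using a rescaling argument. Since normality is a local property, it suffices to show $\mathcal{F}$ is normal at each point of $D$; so suppose $\mathcal{F}$ is not normal at some $z_0\in D$. The natural normalization exponent here is dictated by \eqref{eqn:2}: I would set $\alpha=\Theta_P-1=\Gamma_{M_1}/\gamma_{M_1}-1$. Because each monomial has $1\le \Gamma_{M_i}/\gamma_{M_i}\le k+1$, we have $0\le\alpha\le k<k+1$, so $\alpha$ is admissible for families whose zeros have multiplicity at least $k+1$. By the Zalcman--Pang rescaling lemma there then exist $f_n\in\mathcal{F}$, points $z_n\to z_0$ and scales $\rho_n\to 0^{+}$ such that $F_n(\zeta):=\rho_n^{-\alpha}f_n(z_n+\rho_n\zeta)$ converges locally uniformly on $\mathbb{C}$ to a non-constant entire function $F$ of finite order, all of whose zeros have multiplicity at least $k+1$.

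Next I would identify the limit of the rescaled differential polynomial. A direct computation gives $M_i[f_n](z_n+\rho_n\zeta)=\rho_n^{\,\Theta_P\gamma_{M_i}-\Gamma_{M_i}}M_i[F_n](\zeta)$, where the exponent equals $0$ for $i=1$ and is strictly positive for $2\le i\le m$ precisely by \eqref{eqn:2}. Since the coefficients $a_i$ are holomorphic, hence locally bounded, and $z_n+\rho_n\zeta\to z_0$, the subleading terms vanish locally uniformly and the leading term survives, so $P[f_n](z_n+\rho_n\zeta)\to a_1(z_0)\,M_1[F](\zeta)=:\Phi(\zeta)$ locally uniformly on $\mathbb{C}$, with $a_1(z_0)\neq 0$. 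One checks $\Phi\not\equiv a$: otherwise $M_1[F]$ is a nonzero constant, forcing $F$ to be constant. Using the normality of $\mathcal{G}$, I pass to a subsequence so that the associated $g_n\in\mathcal{G}$ satisfy $g_n\to g$ locally uniformly with $g$ holomorphic ($g\not\equiv\infty$) and $P[g]\not\equiv a$; then $g_n^{(j)}\to g^{(j)}$ and hence $P[g_n]\to P[g]$ locally uniformly, while by Hurwitz every zero of $g$ has multiplicity at least $k+1$ (or $g\equiv 0$).

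The contradiction is then extracted according to whether $F$ is a polynomial or transcendental. If $F$ is a non-constant polynomial, then $\deg F\ge k+1$ and $M_1[F]$ is a non-constant polynomial, so $\Phi-a$ has a zero; by Hurwitz this produces $a$-points $w_n=z_n+\rho_n\zeta_n\to z_0$ of $P[f_n]$, and the partial sharing $Z(a,P[f_n])\subseteq Z(a,P[g_n])$ together with $P[g_n](w_n)\to P[g](z_0)$ yields $P[g](z_0)=a$. On the other hand $F$, being a non-constant polynomial, has a zero; the partial sharing $Z(0,f_n)\subseteq Z(0,g_n)$ then forces a zero $w_n'\to z_0$ of $g_n$, whence $g(z_0)=0$. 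Since $g$ has a zero of multiplicity at least $k+1$ at $z_0$ (or $g\equiv 0$), all of $g(z_0),\ldots,g^{(k)}(z_0)$ vanish, so every $M_i[g](z_0)=0$ and $P[g](z_0)=0\neq a$, a contradiction. If instead $F$ is transcendental, then $M_1[F]$ is transcendental entire, and the value distribution results for differential polynomials established earlier guarantee that $a_1(z_0)M_1[F]-a$ has infinitely many zeros; choosing more of them than the (finite) number of $a$-points of $P[g_n]$ in a fixed small disk about $z_0$ — a number bounded for large $n$ because $P[g]\not\equiv a$ — and transporting them through the partial sharing yields more $a$-points of $P[g_n]$ than it can have, again a contradiction. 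In either case $\mathcal{F}$ must be normal at $z_0$.

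The routine parts are the rescaling and the Hurwitz/transport steps; the genuine obstacle is the transcendental case, which rests on the value distribution statement that $M_1[F]$ (equivalently $a_1(z_0)M_1[F]$) cannot have $a$ as a deficient or omitted value for a transcendental entire $F$ whose zeros have multiplicity at least $k+1$. Establishing that this nonzero value is assumed \emph{infinitely often} — so that it overwhelms the finitely many $a$-points allowed by $P[g]\not\equiv a$ — is exactly the point where the preceding value distribution machinery is indispensable, and care is needed to ensure the hypotheses on $F$ (non-vanishing versus with zeros, and the multiplicity condition) feed correctly into that machinery.
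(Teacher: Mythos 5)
Your argument is correct and is essentially the paper's own proof: the same Zalcman--Pang rescaling with exponent $\alpha=\Theta_P-1$, the same identification of the limit $P[f_n](z_n+\rho_n\zeta)\to a_1(z_0)M_1[\phi]$ via \eqref{eqn:2}, the same transport of zeros and $a$-points through Hurwitz and partial sharing, and the same count of $a$-points of $P[g_n]-a$ in a small disk about $z_0$ against the finite multiplicity forced by $P[g]\not\equiv a$. The only real difference is organizational: the paper first produces a single $a$-point of $M_1[\phi]$ to force $P[g](z_0)=a$, uses that to show $\phi$ omits $0$ and is therefore transcendental, and only then invokes Lemma \ref{lem:3}(i) to get infinitely many $a$-points, whereas your polynomial/transcendental split works too but in the transcendental branch you should likewise first establish $F\neq 0$ so that Lemma \ref{lem:3}(i) alone delivers the infinitude of $a$-points, rather than leaning on Lemma \ref{lem:3}(ii) or Lemma \ref{lem:w1}, whose hypotheses require exactly the case check (e.g.\ $M_1[f]=f^{(k)}$) that you flagged but did not carry out.
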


The following examples show that various conditions in the hypothesis of Theorem \ref{thm:2} are essential.

\begin{example}\label{exp:1}
Let $k$ be a positive integer and $a=1.$ Let $$\mathcal{F}=\left\{f_n:f_n(z)=\frac{nz^{k+1}}{(k+1)!},~n\in\mathbb{N}\right\}$$ and 
$$\mathcal{G}=\left\{g_n:g_n(z)=\frac{z^{k+1}}{2n(k+1)!},~n\in\mathbb{N}\right\}$$
be two families of holomorphic functions in $\mathbb{D}.$ Then for each $n,$ $f_n$ and $g_n$ have zeros of multiplicities at least $k+1$ and $g_n\rightarrow g\equiv 0.$ Clearly, the family $\mathcal{G}$ is normal in $\mathbb{D}.$ Let $P[f]:=(k+1)!ff^{(k)}.$ Then $$P[f_n](z)=(k+1)!f_nf_n^{(k)}(z)=n^2z^{k+2} \mbox{ and } P[g_n](z)= (k+1)!g_ng_n^{(k)}(z)=\frac{z^{k+2}}{4n^2}.$$ One can easily see that $f_n(z)=0\Rightarrow g_n(z)=0$ and $P[f_n](z)=1\not\Rightarrow P[g_n](z)=1.$ However, the family $\mathcal{F}$ is not normal in $\mathbb{D}.$

This example shows that the condition ``$P[f]$ and $P[g]$ partially share $a$ in $D$" in Theorem \ref{thm:2} is essential.
\end{example}

\begin{example}\label{exp:2}
Let $k$ be a positive integer and $a=1.$ Let $$\mathcal{F}=\left\{f_n:f_n(z)=nz^{k+1},~n\in\mathbb{N},~n\geq 2\right\}$$ and $$\mathcal{G}=\left\{g_n:g_n(z)=\left[z+1-\left(\frac{1}{n^2}\right)^{1/(k+2)}\right]^{k+1},~n\in\mathbb{N},~n\geq 2\right\}$$ be two families of holomorphic functions in $\mathbb{D}.$ Then each $f_n$ and $g_n$ has zeros of multiplicities at least $k+1$ and $g_n\rightarrow g=(z+1)^{k+1}\not\equiv\infty.$ Obviously, $f_n(z)=0\not\Rightarrow g_n(z)=0.$ Let $P[f]:=\frac{1}{(k+1)!}ff^{(k)}.$ Then $$P[f_n](z)=n^2z^{k+2}=1\Rightarrow z=\left(\frac{1}{n^{2}}\right)^{1/(k+2)}$$ and $$P[g_n](z)=\left[z+1-\left(\frac{1}{n^2}\right)^{1/(k+2)}\right]^{k+2}=1 \mbox{ whenever } z=\left(\frac{1}{n^2}\right)^{1/(k+2)}.$$ Thus $P[f_n](z)=1\Rightarrow P[g_n](z)=1.$ Note that the family $\mathcal{G}$ is normal in $\mathbb{D},$ however, the family $\mathcal{F}$ is not normal in $\mathbb{D}.$\\
Thus, the condition ``$f$ and $g$ partially share $0$  in $D$" can not be dropped.
\end{example}

\begin{example}\label{exp:3}
Let $k$ be a positive integer and $a$ be any non-zero complex number. Let $\mathcal{F}=\left\{f_n:n\in\mathbb{N}\right\}$ and $\mathcal{G}=\left\{g_n:n\in\mathbb{N}\right\}$ be two families of holomorphic functions in $\mathbb{D},$ where $$f_n(z)=\frac{e^{nz}}{n^k} \mbox{ and } g_n(z)=\frac{e^{nz}}{n^k}+ e^{2kn}.$$ Then each $f_n$ and $g_n$ omit zero in $\mathbb{D}$ and hence $f_n$ and $g_n$ partially share $0$  in $\mathbb{D}.$ Also, $g_n\rightarrow g\equiv\infty$ and so the family $\mathcal{G}$ is normal in $\mathbb{D}.$ Let $P[f]:=f'+f''+\cdots+ f^{(k)}.$ Then clearly $P[f_n](z)=P[g_n](z)$ and hence $P[f_n]$ and $P[g_n]$ partially share $a$ in $\mathbb{D}.$ However, the family $\mathcal{F}$ is not normal in $\mathbb{D}.$\\
This shows that the condition ``$g\not\equiv\infty$" can not be dropped.
\end{example}

\begin{example}\label{exp:4}
Let $k$ be a positive integer and $a$ be a non-zero complex number. Let $\mathcal{F}=\left\{f_n:f_n(z)=e^{nz}/n^k,~n\in\mathbb{N}\right\}$ and $\mathcal{G}=\left\{g_n:g_n(z)=e^z,~n\in\mathbb{N}\right\}$ 
be two families of holomorphic functions in $\mathbb{D}.$ Then $f_n$ and $g_n$ omit zero. Clearly, the family $\mathcal{G}$ is normal in $\mathbb{D}$ and $g_n\rightarrow g\equiv e^z\not\equiv\infty.$ Let $P[f]:=ae^{-z}f^{(k)}.$ Then $P[g]\equiv a.$ Further, $P[f_n](z)=ae^{-z}f_n^{(k)}(z)=ae^{z(n-1)}$ and $P[g_n](z)=ae^{-z}g_n^{(k)}(z)= a.$ It follows that $f_n(z)=0\Rightarrow g_n(z)=0$ and $P[f_n](z)=a\Rightarrow P[g_n](z)=a.$ However, the family $\mathcal{F}$ is not normal in $\mathbb{D}.$\\
This shows that the condition ``$P[g]\not\equiv a$" cannot be removed.
\end{example}

\begin{example}\label{exp:5}
Let $m,~k$ be two positive integers with $k\geq 2$ and let $a$ be a non-zero complex number. Let $\mathcal{F}=\left\{f_n:n\in\mathbb{N}\right\}$ and $\mathcal{G}=\left\{g_n:n\in\mathbb{N}\right\}$ be two families of holomorphic functions in $\mathbb{D},$ where $$f_n(z)=\frac{z^{k+1}}{n(k+1)}+nz \mbox{ and } g_n(z)=\frac{z^{k+1}}{n(k+1)}.$$ Then $f_n$ has only simple zeros and $g_n$  has zeros of multiplicity $k+1.$ Also, $f_n$ and $g_n$ partially share $0$  in $\mathbb{D}$ and $g_n\rightarrow g\equiv 0$ and so the family $\mathcal{G}$ is normal in $\mathbb{D}.$ Let $P[f]:=\left(f^{(k)}\right)^m.$ Then clearly $P[f_n](z)=P[g_n](z)$ and hence $P[f_n]$ and $P[g_n]$ partially share  $a$ in $\mathbb{D}.$ However, the family $\mathcal{F}$ is not normal in $\mathbb{D}.$\\
Thus the condition``all zeros of each $f\in\mathcal{F}$ have multiplicities at least $k+1$" cannot be relaxed.
\end{example}

\begin{example}\label{exp:6}
Let $k\geq 2$ be a positive integer and $a$ be a non-zero complex number. Let $\mathcal{F}=\left\{f_n:n\in\mathbb{N},~n\geq 2\right\}$ and $\mathcal{G}=\left\{g_n:n\in\mathbb{N},~n\geq 2\right\}$ be two families of holomorphic functions in $\mathbb{D}$ given by $$f_n(z)=\frac{nz^{k+1}}{(k+1)!} \mbox{ and } g_n(z)=\frac{z^{k+1}}{(k+1)!}+\frac{az^k}{k!}\left(1-\frac{1}{n}\right).$$ Then for each $n,$ $f_n$ has zeros of multiplicity $k+1$ and $g_n$ has zeros of multiplicity $k.$ Also, $f_n$ and $g_n$ partially share $0$ in $\mathbb{D}$ and $g_n\rightarrow g\equiv z^{k+1}/(k+1)!+ az^k/k!$ and so the family $\mathcal{G}$ is normal in $\mathbb{D}.$ Let $P[f]:=f^{(k)}.$ Then $P[g](z)=z+a\not\equiv a.$ Now, $P[f_n](z)=nz=a\Rightarrow z=a/n$ and $P[g_n](z)=z+a(1-1/n)=a~\mbox{whenever } z=a/n.$ Thus $P[f_n](z)$ and $P[g_n](z)$ partially share $a$ in $\mathbb{D}.$ However, the family $\mathcal{F}$ is not normal in $\mathbb{D}.$\\
This shows that the condition ``all zeros of each $g\in\mathcal{G}$ have multiplicities at least $k+1$" cannot be dropped.
\end{example}

\begin{example}\label{exp:7}
Let $a$ be any non-zero complex number. Consider the families $\mathcal{F}=\left\{f_n:f_n(z)=e^{-nz},~n\in\mathbb{N}\right\}$ and $\mathcal{G}=\left\{g_n:g_n(z)=e^z,~n\in\mathbb{N}\right\}$ of holomorphic functions in $\mathbb{D}.$ Then for each $n,$ $f_n$ and $g_n$ omit zero in $\mathbb{D}.$ Also, $g_n\rightarrow g\equiv e^z\not\equiv\infty$ and so the family $\mathcal{G}$ is normal in $\mathbb{D}.$ Let $P[f]:=M_1[f]+M_2[f],$ where $M_1[f]:=-f'f^{(4)}$ and $M_2[f]:=f''f^{(3)}$ so that $\Gamma_{M_1}/\gamma_{M_1}=7/2=\Gamma_{M_2}/\gamma_{M_2}.$ Then $P[g]\equiv 0\not\equiv a$ and $$P[f_n](z)=\frac{n}{e^{nz}}\cdot\frac{n^4}{e^{nz}}+\frac{n^2}{e^{nz}}\cdot\frac{-n^3}{e^{nz}}=0.$$ It follows vacuously that $f_n$ and $g_n$ partially share $0$  in $\mathbb{D}$ and $P[f_n]$ and $P[g_n]$ partially share $a$  in $\mathbb{D}.$ However, the family $\mathcal{F}$ is not normal in $\mathbb{D}.$\\
This shows that the condition ``$P[f]$ satisfies \eqref{eqn:2}" cannot be removed.
\end{example}

The meromorphic analogue of Theorem \ref{thm:2} is obtained as:

\begin{theorem}\label{thm:3}
Let $\mathcal{F},~\mathcal{G}\subset\mathcal{M}(D)$ be two families, all of whose zeros have multiplicities at least $k+1,~k\in\mathbb{N}.$ Let $P[f]$ be a differential polynomial defined in \eqref{eqn:1} and satisfying \eqref{eqn:2} such that $\Theta_P> 1.$ Let $a$ be a non-zero complex number. Assume that the family $\mathcal{G}$ is normal in $D$ such that for any subsequence $\left\{g_n\right\}\subset\mathcal{G},~g_n\rightarrow g,~g\not\equiv\infty$ and $P[g]\not\equiv a.$ If for every $f\in\mathcal{F},$ there exists $g\in\mathcal{G}$ such that $f$ and $g$ partially share $0$ and $\infty$, and  $P[f]$ and $P[g]$ parially share $a$  in $D,$ then $\mathcal{F}$ is normal in $D.$
\end{theorem}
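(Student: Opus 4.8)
The plan is to argue by contradiction through the Pang--Zalcman rescaling lemma, reducing normality of $\mathcal{F}$ to a value distribution statement about the leading monomial $M_1$ of $P$. Suppose $\mathcal{F}$ is not normal at some $z_0\in D$. The exponent driving the argument is $\alpha:=\Theta_P-1=\Gamma_{M_1}/\gamma_{M_1}-1$; since $\Theta_P>1$ and $\Theta_P$ is a weighted average of the numbers $j+1\ (0\le j\le k)$, we have $0<\alpha\le k$. As every zero of a member of $\mathcal{F}$ has multiplicity $\ge k+1$, the derivative $f^{(k)}$ vanishes at each zero, so the hypothesis of the Pang--Zalcman lemma holds trivially; applying it with this admissible $\alpha$ produces $f_n\in\mathcal{F}$, points $z_n\to z_0$ and scalars $\rho_n\to0^{+}$ such that
$$F_n(\zeta):=\rho_n^{-\alpha}f_n(z_n+\rho_n\zeta)\longrightarrow F(\zeta)$$
spherically locally uniformly on $\mathbb{C}$, where $F$ is non-constant, meromorphic, of finite order, and, by Hurwitz, again has all zeros of multiplicity $\ge k+1$. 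Choosing for each $n$ a partner $g_n\in\mathcal{G}$ as in the hypothesis and passing to a subsequence, normality of $\mathcal{G}$ gives $g_n\to g\not\equiv\infty$.

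Along the rescaling, $f_n^{(j)}(z_n+\rho_n\zeta)=\rho_n^{\alpha-j}F_n^{(j)}(\zeta)$ and $\sum_j j\,n_{ji}=\Gamma_{M_i}-\gamma_{M_i}$ give
$$M_i[f_n](z_n+\rho_n\zeta)=\rho_n^{(\alpha+1)\gamma_{M_i}-\Gamma_{M_i}}\,M_i[F_n](\zeta).$$
Since $\alpha+1=\Theta_P$, the exponent is $0$ for $i=1$ and, by \eqref{eqn:2}, strictly positive for every $i\ge2$, so the lower monomials vanish in the limit; as $a_i(z_n+\rho_n\zeta)\to a_i(z_0)$ (with $a_1(z_0)\ne0$),
$$P[f_n](z_n+\rho_n\zeta)\longrightarrow a_1(z_0)\,M_1[F](\zeta)=:\Psi(\zeta)$$
locally uniformly off the poles of $F$. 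Put $b:=a/a_1(z_0)\ne0$, so $\Psi-a$ and $M_1[F]-b$ share the same zeros, while $P[g_n]\to P[g]\not\equiv a$ off the isolated poles of $g$.

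Now I distinguish cases by $g(z_0)$. If $g(z_0)\notin\{0,\infty\}$, then $g_n$, and hence $f_n$ through the partial sharing of $0$ and of $\infty$, is free of zeros and poles near $z_0$, so $F$ is entire and zero-free, hence transcendental. If $g(z_0)=0$, then $g$ has a zero of multiplicity $\ge k+1$, so every $g^{(j)}\ (0\le j\le k)$ and thus every monomial $M_i[g]$ vanishes at $z_0$, giving $P[g](z_0)=0\ne a$; similarly a pole of $g$ makes $P[g]$ blow up at $z_0$, so again $P[g](z_0)\ne a$. Whenever $P[g](z_0)\ne a$, the function $P[g_n]-a$ has no zeros on a fixed disc about $z_0$ for large $n$, so by $Z(a,P[f_n])\subseteq Z(a,P[g_n])$ neither does $P[f_n]-a$, whence $\Psi-a$ has no zeros and $M_1[F]$ omits $b$. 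In the only remaining possibility, $P[g](z_0)=a$, we are necessarily in the case $g(z_0)\notin\{0,\infty\}$ (so $F$ is entire and zero-free), and a Hurwitz count in a disc where $z_0$ is the lone, order-$d$, zero of $P[g]-a$ bounds the distinct zeros of $\Psi-a$ by $d$.

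Thus $F$ is either transcendental, or rational with $M_1[F]$ omitting $b$. A non-constant rational function is onto $\widehat{\mathbb{C}}$ and hence cannot omit $b$, while $M_1[F]\equiv\mathrm{const}$ would force $F$ to be constant; so the rational case cannot occur. For transcendental $F$ the contradiction is supplied by the value distribution result for differential monomials — valid precisely because $\Theta_P>1$ — which asserts that $M_1[F]$ can neither omit a non-zero value nor assume it only finitely often, against the conclusion of the previous paragraph. This value distribution input is the step I expect to be the genuine obstacle, and it is exactly where the hypothesis $\Theta_P>1$ is indispensable (it is absent from the holomorphic Theorem \ref{thm:2}): for a meromorphic $F$, a monomial whose weight equals its degree can genuinely omit values. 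The remaining care lies in excluding the degenerate case $\Psi\equiv a$ (that is $M_1[F]\equiv b$, which forces $F=e^{h}$ and is eliminated by a logarithmic-derivative growth estimate making $h$ constant), in handling possible principal-part cancellations when $g$ has a pole at $z_0$, and in the two-scale bookkeeping that transports the sharing relation through Hurwitz's theorem between the $\rho_n$-scale on which $F_n\to F$ and the unscaled limit $g_n\to g$.
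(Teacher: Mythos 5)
Your strategy is the same as the paper's (Zalcman--Pang rescaling with $\alpha=\Theta_P-1$, identification of the limit of $P[f_n](z_n+\rho_n\zeta)$ with $a_1(z_0)M_1[F]$, transfer of zeros, poles and $a$-points to $g$ at $z_0$ via partial sharing and Hurwitz, and the final counting against the multiplicity of the zero of $P[g]-a$), but two steps do not hold as written. The first is your dismissal of the rational case: surjectivity onto $\widehat{\mathbb{C}}$ does not prevent a non-constant rational function from omitting a finite value on $\mathbb{C}$, since it may attain $b$ only at $\infty$. Concretely, for $k=1$ and $M_1[F]=F'$, the function $F(z)=(z-c_1)^2/(z-c_2)$ has all zeros of multiplicity $k+1=2$, yet $F'-1=-(c_1-c_2)^2/(z-c_2)^2$ is zero-free on $\mathbb{C}$. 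To close this you must first use the sharing hypotheses to show that a rational $F$ cannot have both zeros and poles (a zero of $F$ forces $g(z_0)=0$ while a pole forces $g(z_0)=\infty$), so $F$ is either a polynomial or zero-free; only then is $M_1[F](\infty)\in\{0,\infty\}$, so $M_1[F](\infty)\neq b$ and the surjectivity argument becomes legitimate (alternatively, for zero-free rational $F$ the paper's Lemma \ref{lem:1} supplies at least $\Gamma_{M_1}$ zeros of $M_1[F]-b$). The paper runs exactly these zero/pole transfers as its Claims 1 and 2 and never relies on the surjectivity shortcut.

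The second, and more serious, gap is the engine of your argument: the claim that for transcendental $F$ of finite order with zeros of multiplicity at least $k+1$ and $\Gamma_{M_1}>\gamma_{M_1}$, the monomial $M_1[F]$ assumes every non-zero value infinitely often. You assert this, flag it yourself as the obstacle, and do not prove it; it is not an off-the-shelf theorem in this generality, nor is it what the paper establishes. The paper's Lemma \ref{lem:3}(ii) needs the strictly stronger hypothesis $(k+2)\nu_P>\Gamma_P+2$, which fails for $M_1=f^{(k)}$ and for $M_1=(f^{(k)})^{2}$ even though $\Theta_P>1$ there. The paper sidesteps the need for your statement in the meromorphic setting by reversing the order of reductions: it first uses the partial sharing of $0$ and $\infty$ to prove that the rescaled limit is a non-vanishing entire (hence transcendental) function, and only then invokes Hayman's alternative (for $M_1=(f^{(k)})^{n_k}$, its Case 1) or Lemma \ref{lem:3}(i) with $\nu_P=\gamma_{M_1}>1$ (its Case 2) to produce infinitely many $a$-points. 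Until you either prove your general monomial statement or restructure the argument so that zeros and poles of $F$ are eliminated before the value-distribution input is applied, the proof is incomplete. The remaining items you defer (the degenerate case $M_1[F]\equiv b$, principal-part cancellation at poles of $g$, and the two-scale Hurwitz bookkeeping) are genuine but routine by comparison.
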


Examples \ref{exp:1}-\ref{exp:7} along with the following examples show that all the conditions in Theorem \ref{thm:3} are essential:

\begin{example}
Let $k$ be a positive integer and $a=1.$ Let $\mathcal{F}=\left\{f_n:n\in\mathbb{N},~n\geq 2\right\}$ and $\mathcal{G}=\left\{g_n:n\in\mathbb{N},~n\geq 2\right\}$ be two families of meromorphic functions in $\mathbb{D},$ where $$f_n(z)=\frac{1}{(-1)^kk!~nz} \mbox{ and } g_n(z)=\frac{1}{(k+1)!}\left[z+1-\left(\frac{1}{n}\right)^{1/(k+1)}\right]^{k+1} .$$ Then for each $n,$ $f_n(z)\neq 0$ and $g_n$ has zeros of multiplicity $k+1.$ Obviously, $f_n$ and $g_n$ partially share $0$  in $\mathbb{D}.$ Also, $g_n\rightarrow g=1/(k+1)!(z+1)^{k+1}\not\equiv\infty$ and so the family $\mathcal{G}$ is normal in $\mathbb{D}.$ Let $P[f]:=f^{(k)}.$ Then $P[g]=z+1\not\equiv 1.$ Also, $$P[f_n](z)=\frac{1}{nz^{k+1}}=1\Rightarrow z=\left(\frac{1}{n}\right)^{1/(k+1)}$$ and $$P[g_n](z)=z+1-\left(\frac{1}{n}\right)^{1/(k+1)}=1 \mbox{ whenever } z=\left(\frac{1}{n}\right)^{1/(k+1)}.$$ Thus $P[f_n]$ and $P[g_n]$ partially share $1$  in $\mathbb{D}.$ However, the family $\mathcal{F}$ is not normal in $\mathbb{D}.$\\ 
This example shows that the condition ``$f$ and $g$ partially share $\infty$  in $D$" can not be dropped.
\end{example}

\begin{example}
Consider the families $$\mathcal{F}=\left\{f_n:f_n(z)=\frac{1}{e^{nz}+1},~n\in\mathbb{N}\right\}$$ and $$\mathcal{G}=\left\{g_n:g_n(z)=\frac{1}{e^n\left(e^{nz}+1\right)},~n\in\mathbb{N}\right\}$$ of meromorphic functions in $\mathbb{D}.$ Then  $f_n$ and $g_n$ omit zero. Clearly, $f_n$ and $g_n$ partially share $\infty$  in $\mathbb{D}.$ Also, $g_n\rightarrow g\equiv 0\not\equiv\infty$ and so the family $\mathcal{G}$ is normal in $\mathbb{D}.$ Let $P[f]:=f.$ Then $\Theta_P=1.$ Since $f_n$ omits $0$ and $1,$ it follows vacuously that $f_n$ and $g_n$ partially share $0$  in $\mathbb{D}$ and $P[f_n]$ and $P[g_n]$ partially share $1$  in $\mathbb{D}.$ However, the family $\mathcal{F}$ is not normal in $\mathbb{D}.$\\
This shows that the condition ``$\Theta_P>1$" cannot be removed.
\end{example}

\begin{remark}
The assumption that the coefficients of the differential polynomial $P[f]$ considered in Theorem \ref{thm:2} and Theorem \ref{thm:3} are holomorphic is essential. This can be seen from the following example:

Consider the families $\mathcal{F}=\left\{f_n:n\in\mathbb{N}\right\}$ and $\mathcal{G}=\left\{g_n:n\in\mathbb{N}\right\},$ where $$f_n(z)=\frac{nz^{k+1}}{(k+1)!} \mbox{ and } g_n(z)=\frac{z^{k+1}}{(k+1)!},~z\in\mathbb{D}.$$ Then $\mathcal{F},~\mathcal{G}\subset\mathcal{H}(\mathbb{D}),$ and $f_n$ and $g_n$ have zeros of multiplicities $k+1.$ Obviously, the family $\mathcal{G}$ is normal in $\mathbb{D}$ and $g_n\rightarrow g\equiv z^{k+1}/(k+1)!\not\equiv\infty.$ Let $P[f]:= (1/z)f^{(k)}.$ Then $P[f_n](z)=n \mbox{ and } P[g_n](z)=1.$ It follows that $f_n$ and $g_n$ partially share $0$ and $\infty$  in $\mathbb{D},$ and for any $a\in\mathbb{C}\setminus\mathbb{N},$ $P[f_n]$ and $P[g_n]$ partially share $a$ in $\mathbb{D}.$ However, the family $\mathcal{F}$ is not normal in $\mathbb{D}.$

Further, the assumption on the coefficients of the differential polynomial $P[f]$ namely, ``$a_1(z)\neq 0$" is essential. For example, consider the families $\mathcal{F}=\left\{f_n:n\in\mathbb{N}\right\}$ and $\mathcal{G}=\left\{g_n:n\in\mathbb{N}\right\},$ where $$f_n(z)=\frac{1}{nz} \mbox{ and } g_n(z)=\frac{1}{z},~z\in\mathbb{D}.$$ Then $\mathcal{F},~\mathcal{G}\subset\mathcal{M}(\mathbb{D}),$ and $f_n$ and $g_n$ omit zero. Obviously, the family $\mathcal{G}$ is normal in $\mathbb{D}$ and $g_n\rightarrow g\equiv 1/z\not\equiv\infty.$ Let $P[f]:= \frac{z^{k+1}}{(-1)^{k}k!}f^{(k)}.$ Then $$P[f_n](z)=\frac{z^{k+1}}{(-1)^{k}k!}\cdot\frac{(-1)^{k}k!}{nz^{k+1}}=\frac{1}{n} \mbox{ and } P[g_n](z)=1.$$ It follows that $f_n$ and $g_n$ partially share $0$ and $\infty$ in $\mathbb{D}$ and for any $a\in\mathbb{C}\setminus\left\{1/n:n\in\mathbb{N}\right\},$  $P[f_n]$ and $P[g_n]$ partially share $a$ in $\mathbb{D}.$ However, the family $\mathcal{F}$ is not normal in $\mathbb{D}.$       
\end{remark}

In \cite[Theorem 1.7]{singh}, Singh and the first author proved the following theorem:

\begin{theorem}\label{thm:singh}
Let $n_0,~n_1,\ldots,~n_k$ be non-negative integers such that $n_0\geq 2,~n_k\geq 1$ and $k\in\mathbb{N}.$ Let $h~(\not\equiv 0)$ be a holomorphic function in $D$ with zeros of multiplicity $m~(\geq 1).$ Let $\mathcal{F}\subset\mathcal{M}(D)$ be such that each $f\in\mathcal{F}$ has zeros of multiplicity at least $k+m+1$ and poles of multiplicity at least $m+1.$ If for every $f,~g\in\mathcal{F},$ $M[f]$ and $M[g]$ share $h$ in $D,$ then $\mathcal{F}$ is normal in $D.$
\end{theorem}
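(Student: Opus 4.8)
The plan is to argue by contradiction, combining a rescaling (Pang--Zalcman) argument with a value distribution theorem for the monomial $M[\cdot]$. Since normality is local, it suffices to prove that $\mathcal{F}$ is normal at each $z_0\in D$. Fix $z_0$ and let $\mu$ be the order of vanishing of $h$ at $z_0$, so $\mu=m$ if $h(z_0)=0$ and $\mu=0$ otherwise. I set
$$\alpha=\frac{\mu+\Gamma_M-\gamma_M}{\gamma_M}.$$
Using $\gamma_M\ge n_0\ge 2$ together with $1\le \Gamma_M/\gamma_M\le k+1$, one checks that $-(m+1)<\alpha<k+m+1$, so $\alpha$ lies in the admissible range of the Pang--Zalcman lemma for the multiplicity bounds ``zeros $\ge k+m+1$, poles $\ge m+1$.'' Assuming $\mathcal{F}$ is not normal at $z_0$, that lemma yields $f_n\in\mathcal{F}$, points $z_n\to z_0$ and scales $\rho_n\to 0^+$ with
$$g_n(\zeta):=\rho_n^{-\alpha}f_n(z_n+\rho_n\zeta)\longrightarrow g(\zeta)$$
locally uniformly in the spherical metric, where $g$ is a non-constant meromorphic function on $\mathbb{C}$ of finite order, again with zeros of multiplicity $\ge k+m+1$ and poles of multiplicity $\ge m+1$. (When $h(z_0)=0$ one should, after passing to a subsequence, re-centre at $z_0$ on the scale $|z_n-z_0|$ should the Zalcman points drift away from the zero of $h$; this only affects the bookkeeping below.)

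The exponent $\alpha$ is chosen so that $M[\cdot]$ scales correctly against $h$: since $g_n^{(j)}(\zeta)=\rho_n^{j-\alpha}f_n^{(j)}(z_n+\rho_n\zeta)$, a direct computation gives
$$M[g_n](\zeta)=\rho_n^{(\Gamma_M-\gamma_M)-\alpha\gamma_M}M[f_n](z_n+\rho_n\zeta)=\rho_n^{-\mu}M[f_n](z_n+\rho_n\zeta).$$
On the other hand $\rho_n^{-\mu}h(z_n+\rho_n\zeta)\to P(\zeta)$, a polynomial with $P\not\equiv 0$ and $\deg P=\mu\le m$ (a non-zero constant if $\mu=0$, and $c(\zeta+\beta)^m$ with $c\ne 0$ if $\mu=m$). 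Because $g_n\to g$ forces $g_n^{(j)}\to g^{(j)}$ off the poles of $g$, we get $M[g_n]\to M[g]$ there, whence
$$M[g_n](\zeta)-\rho_n^{-\mu}h(z_n+\rho_n\zeta)\longrightarrow M[g](\zeta)-P(\zeta).$$

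Next I would exploit the mutual sharing. Fix $f_0\in\mathcal{F}$ and put $E:=Z(h,M[f_0])$; pairwise sharing gives $Z(h,M[f_n])=E$ for every $n$, so $E$ is a fixed discrete subset of $D$, and the zeros of $M[g_n]-\rho_n^{-\mu}h(z_n+\rho_n\cdot)$ occur only at those $\zeta$ with $z_n+\rho_n\zeta\in E$. As $E$ is discrete and $\rho_n\to 0$, every point of $E$ distinct from $z_0$ is pushed to $\infty$ under the rescaling, and only $z_0$ can contribute a bounded image. By Hurwitz's theorem, the limit $M[g]-P$ is therefore either identically zero or has at most one distinct zero in $\mathbb{C}$. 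The identically-zero branch $M[g]\equiv P$ is impossible: then $M[g]$ has no poles, so $g$ is entire; if $g$ had a zero then $M[g]$ would vanish there to order at least $(k+m+1)\gamma_M-(\Gamma_M-\gamma_M)\ge(m+1)\gamma_M\ge 2m+2>m=\deg P$, which is absurd, while a zero-free entire $g$ of finite order is $e^{q}$ with $q$ a polynomial, making $M[g]$ transcendental and hence $\ne P$ unless $g$ is constant. (This also disposes of the degenerate possibility $M[f]\equiv h$.) Thus $M[g]-P$ has at most one distinct zero.

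The crux --- and the step I expect to demand the most work --- is to rule this out, i.e. to show that a non-constant $g$ as above cannot have $M[g]-P$ with only finitely many zeros; this is precisely a value distribution theorem for differential monomials. If $g$ is transcendental, the high multiplicities give $\overline{N}(r,g)\le\frac{1}{m+1}T(r,g)$ and $\overline{N}(r,1/g)\le\frac{1}{k+m+1}T(r,g)$, and feeding these, together with $n_0\ge 2$, $n_k\ge 1$ and the smallness of the polynomial $P$, into a Hayman--Milloux type inequality for $M[g]$ yields $T(r,g)\le\theta\,T(r,g)+S(r,g)$ with $\theta<1$; hence $M[g]-P$ has infinitely many zeros, a contradiction. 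If $g$ is rational the Nevanlinna estimate is vacuous and one argues by a direct count: each zero of $g$ forces a zero of $M[g]$ of order $\ge 2m+2$ and each pole of $g$ a pole of $M[g]$ of order $\ge(m+1)\gamma_M+(\Gamma_M-\gamma_M)$, both far exceeding $\deg P\le m$, so comparing the zero, pole and ramification data of the rational map $M[g]$ shows that $M[g]-P$ has at least two distinct zeros, again a contradiction. Either way $\mathcal{F}$ must be normal at $z_0$, and since $z_0$ was arbitrary, $\mathcal{F}$ is normal in $D$. The technical heart is the monomial value distribution estimate --- above all the bookkeeping in the rational case and the verification that the second main theorem delivers the constant $\theta<1$ from the prescribed multiplicity savings.
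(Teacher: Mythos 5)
Your strategy coincides with the one the paper uses to prove the stronger Theorem \ref{thm:1}: assume non-normality, rescale by Pang--Zalcman with an exponent tuned to $M$ and to the vanishing order of $h$ at $z_0$, show by a value distribution theorem that $M[g]-P$ must have at least two distinct zeros, and use Hurwitz plus the sharing hypothesis to show it has at most one (your localization via the fixed discrete set $E$ is a clean equivalent of the paper's argument with a fixed $f_p$). The first genuine gap is the case $h(z_0)=0$ with $|z_n-z_0|/\rho_n\to\infty$, which is not ``bookkeeping.'' Your identity $M[g_n](\zeta)=\rho_n^{-m}M[f_n](z_n+\rho_n\zeta)$ is fine, but the claim that $\rho_n^{-m}h(z_n+\rho_n\zeta)$ converges to a polynomial of degree $m$ requires $(z_n-z_0)/\rho_n$ to converge in $\mathbb{C}$; when it drifts to $\infty$ this quantity blows up and the whole comparison collapses. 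The paper devotes Subcase 2.1 of the proof of Theorem \ref{thm:1} to precisely this situation: one introduces the auxiliary family $V_j(\xi)=z_j^{-\beta}f_j(z_j+z_j\xi)$, proves it is normal near the origin by reduction to the case $h\neq 0$, and then shows that the original Zalcman limit would have to satisfy $g\equiv\infty$ or $g'\equiv 0$, contradicting its non-constancy. ``Re-centre on the scale $|z_n-z_0|$'' is the right first move, but the contradiction it yields is of a different nature and must actually be argued.

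The second gap is the value distribution input itself in the rational case. Your transcendental sketch is sound and matches the paper's Lemma \ref{lem:2} and Lemma \ref{lem:3}(ii): the multiplicity savings do produce a constant $\theta<1$ in the Hayman--Milloux inequality. But for rational $g$, ``comparing the zero, pole and ramification data'' is not a proof: a naive degree count does not exclude the possibility that $M[g]-P$ has a single distinct zero of very high multiplicity, which is exactly the scenario that must be ruled out. The paper relies here on Lemma \ref{singh} (quoted from earlier work, with hypotheses matching the ones your $g$ inherits) and on its own Lemma \ref{lem:1}, whose proof writes $M[g]-P$ with one postulated zero, takes logarithmic derivatives, and compares Taylor coefficients at the origin to obtain a Vandermonde system whose nonvanishing determinant forces at least two (in the zero-free case, at least $\Gamma_M$) distinct zeros. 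That argument occupies several pages and is the technical heart you correctly identify but do not supply; without it, and without the drifting subcase, the proposal is an accurate outline rather than a complete proof.
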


We remove the condition on the multiplicities of poles of $f\in\mathcal{F}$ and improve Theorem \ref{thm:singh} as:

\begin{theorem}\label{thm:1}
Let $n_0,~n_1,\ldots,~n_k$ be non-negative integers with $n_0\geq 2,~n_k\geq 1$ and $k\in\mathbb{N}.$ Let $h~(\not\equiv 0)$ be a holomorphic function in $D$ having zeros of multiplicity at most $m~(\geq 1).$ Let $\mathcal{F}\subset\mathcal{M}(D)$ be such that zeros of each $f\in\mathcal{F}$ have multiplicities at least $k+m+1.$ If for every $f,~g\in\mathcal{F},$ $M[f]$ and $M[g]$ share $h$ in $D,$ then $\mathcal{F}$ is normal in $D.$
\end{theorem}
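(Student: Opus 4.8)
The plan is to use Zalcman's rescaling lemma to argue by contradiction, which is the standard machinery for normality criteria of this type. Suppose $\mathcal{F}$ is not normal at some point $z_0 \in D$. Since the conclusion is local, I would work on a small disk $D(z_0, r)$ where $h$ is nonvanishing except possibly at finitely many points, and since the zeros of $h$ have multiplicity at most $m$, I can write $h(z) = (z - z_0)^p h_1(z)$ near a zero with $0 \le p \le m$ and $h_1(z_0) \neq 0$. The first step is to apply a suitable version of the Zalcman--Pang lemma adapted to the monomial $M[f]$: because $\gamma_M = n_0 + n_1 + \cdots + n_k \ge 2$ and the zeros of each $f \in \mathcal{F}$ have multiplicity at least $k + m + 1$, there exist points $z_n \to z_0$, positive numbers $\rho_n \to 0$, and functions $f_n \in \mathcal{F}$ such that the rescaled sequence
\begin{equation}\label{eqn:rescale}
g_n(\zeta) := \rho_n^{-\alpha} f_n(z_n + \rho_n \zeta)
\end{equation}
converges locally uniformly (in the spherical metric) to a nonconstant meromorphic function $g$ on $\mathbb{C}$, where the exponent $\alpha$ is chosen so that the rescaled monomial $M[f_n]$ converges to $M[g]$. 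Choosing $\alpha = \Gamma_M / \gamma_M - 1$ (equivalently matching the scaling of $M$ to that of $h$ at the point) is the delicate bookkeeping here, and the multiplicity hypotheses guarantee that $g$ and its derivatives inherit the right zero structure so that $g$ is genuinely nonconstant.

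The second step is to identify what the limit function $g$ must satisfy. Under the sharing hypothesis, for all $f, g \in \mathcal{F}$ the set $Z(h, M[f]) = Z(h, M[g])$, so the zeros of $M[f_n] - h$ are common across the family. After rescaling and passing to the limit, $h(z_n + \rho_n \zeta)$ scales like $\rho_n^{\,\beta}$ times a constant (with $\beta$ determined by the local vanishing order $p$ of $h$ at $z_0$), and by matching exponents the sharing condition forces the limit $M[g]$ to either omit a certain nonzero constant or to share it in a way that, combined with the zero-multiplicity constraints on $g$, is impossible. Concretely, I expect to arrive at the statement that $M[g](\zeta) = c$ (a nonzero constant) has no solutions, i.e. $M[g] - c$ is zero-free, while simultaneously all zeros of $g$ have multiplicity at least $k + m + 1$. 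Since $n_0 \ge 2$ and $n_k \ge 1$, the monomial $M[g] = (g)^{n_0}(g')^{n_1}\cdots(g^{(k)})^{n_k}$ genuinely involves $g$ itself and its $k$th derivative, which is what lets us invoke a value-distribution (second main theorem or Hayman-type) argument.

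The third step is to derive the contradiction from the properties of $g$ on the whole plane. Because $M[g]$ is a differential monomial of degree $\gamma_M \ge 2$ with $n_0 \ge 2$ and highest-order term $n_k \ge 1$, and because $M[g] - c$ is zero-free for some nonzero $c$ while the zeros of $g$ are of multiplicity at least $k + m + 1$, I would apply a Hayman-type inequality or an appropriate lemma bounding $T(r, g)$ by $N(r, 1/g)$ and $N(r, 1/(M[g] - c))$ (both of which are essentially absent under our hypotheses) to conclude that $g$ must be constant, contradicting the nonconstancy delivered by Zalcman's lemma. The removal of the pole-multiplicity hypothesis on $f$ (the improvement over Theorem \ref{thm:singh}) is handled here: the presence of $n_0 \ge 2$ means poles of $f$ become poles of $M[f]$ of order at least $2n_0 \ge 4$ contributing heavily to $N(r, M[g])$, so the counting estimate closes without any separate assumption controlling pole multiplicities.

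The main obstacle I anticipate is the exponent bookkeeping in step one together with the limit identification in step two: one must verify that the single scaling exponent $\alpha$ simultaneously makes the rescaled $f_n$ converge to a nonconstant $g$ and makes $M[f_n]$ converge to $M[g]$ while $h$ rescales to a finite nonzero limit, and that the partial-sharing/sharing information survives the limit in a usable form. In particular, handling the finitely many zeros of $h$ in the disk (where the local exponent $p$ varies) and ensuring the argument is uniform requires care; the weaker hypothesis ``zeros of $h$ of multiplicity at most $m$'' (versus exactly $m$ in Theorem \ref{thm:singh}) must be used precisely at the point where $\beta$ is compared against the zero multiplicities $k + m + 1$ of $f$, so that the rescaled limit of $h$ does not blow up and the zero-free conclusion for $M[g] - c$ is legitimately obtained.
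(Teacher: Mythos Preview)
Your overall architecture is right (Zalcman rescaling, limit identification, value-distribution contradiction), but two points are genuine gaps rather than mere bookkeeping.

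\textbf{The sharing does not give ``zero-free''.} From the hypothesis that all $M[f]-h$, $f\in\mathcal{F}$, have the \emph{same} (discrete) zero set, the standard Hurwitz argument only yields that the limit $M[g]-c$ has \emph{at most one} zero: if $\xi_1\neq\xi_2$ were two zeros, the corresponding points $z_j+\rho_j\xi_{j,\ell}$ would both converge to $z_0$ and lie in the fixed zero set of $M[f_p]-h$ for any fixed $p$, forcing $\xi_{j,1}=\xi_{j,2}$ for large $j$. You cannot exclude a single zero this way (e.g.\ if $z_0$ itself is a common zero). Consequently the contradiction in step three cannot be ``$M[g]-c$ is zero-free $\Rightarrow$ $g$ constant''; the paper instead proves auxiliary lemmas guaranteeing that $M[g]-c$ (or its polynomial analogue) has \emph{at least two} distinct zeros, and it is this ``at least two'' versus ``at most one'' that closes the argument.

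\textbf{The case $h(z_0)=0$ does not reduce to a nonzero constant.} Your expectation that ``$h(z_n+\rho_n\zeta)$ scales like $\rho_n^{\beta}$ times a constant'' is incorrect when $h(z_0)=0$. Writing $h(z)=(z-z_0)^t h_1(z)$ with $1\le t\le m$, the rescaled $h$ depends on the behaviour of $z_j/\rho_j$. The paper splits into two subcases: if $z_j/\rho_j\to\infty$ one uses an auxiliary family $V_j(\xi)=z_j^{-\beta}f_j(z_j+z_j\xi)$ and reduces to the already-handled case $h\neq 0$; if $z_j/\rho_j\to\zeta$ is finite one passes to $W_j(\xi)=\rho_j^{-\beta}f_j(\rho_j\xi)$ and the limit equation becomes $M[W](\xi)-\xi^{t}$, with $\xi^{t}$ a genuine \emph{polynomial}, not a constant. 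The contradiction in this subcase requires the paper's new Lemma~\ref{lem:2}, which shows that for a nonconstant meromorphic $f$ with zeros of multiplicity at least $k+m$ and a polynomial $p$ of degree $m\ge 1$, $M[f]-p$ has at least two distinct zeros. This lemma (and its zero-free companion Lemma~\ref{lem:1}) is the main technical input and is precisely what allows the pole-multiplicity hypothesis of Theorem~\ref{thm:singh} to be dropped; your proposal does not supply an analogue.
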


The inevitability of the condition ``$h\not\equiv 0$" in Theorem \ref{thm:1} can be seen from the following example:

\begin{example}
Consider the family $\mathcal{F}=\left\{f_n:n\in\mathbb{N}\right\}$ of meromorphic functions on $\mathbb{D}$ given by $f_n(z)=e^{nz}.$ Then for each $n,$ $f_n$ omits $0.$ Let $M[f]:=f^2f'.$ Then $M[f_n](z)=f_n^2f_n'=ne^{3nz}.$ Clearly, for distinct $m,~n,$ $M[f_m]$ and $M[f_n]$ share $h\equiv 0$ in $\mathbb{D}.$ However, the family $\mathcal{F}$ is not normal in $\mathbb{D}.$ 
\end{example}

The following example demonstrates that the condition ``zeros of each $f\in\mathcal{F}$ have multiplicities at least $k+m+1$" in Theorem \ref{thm:1} cannot be dropped:

\begin{example}
Let $k\geq 2$ be a natural number and consider the family $\mathcal{F}=\left\{f_n:n\in\mathbb{N}\right\}$ of meromorphic functions on $\mathbb{D}$ given by $f_n(z)=nz^k.$ Then for each $f\in\mathcal{F}$ has zeros of multiplicity  $k.$ Let $M[f]:=f^2f'.$ Then $M[f_n](z)=f_n^2f_n'=kn^3z^{3k-1}.$ Take $h(z)=z^{3k-1}.$ Then $M[f_n](z)-h(z)=z^{3k-1}(kn^3-1).$ Clearly, for distinct $m,~n,$ $M[f_m]$ and $M[f_n]$ share $h$ in $\mathbb{D}.$ However, the family $\mathcal{F}$ is not normal in $\mathbb{D}.$ 
\end{example}

As an immediate consequence of Theorem \ref{thm:1}, we have

\begin{corollary}\label{cor:1}
Let $n_0,~n_1,\ldots,~n_k$ be non-negative integers with $n_0\geq 2,~n_k\geq 1$ and $k\in\mathbb{N}.$ Let $h$ be non-vanishing  holomorphic function in $D.$ Let $\mathcal{F}\subset\mathcal{M}(D)$ be such that for each $f\in\mathcal{F},$ $M[f]-h$ has no zero in $D.$ Then $\mathcal{F}$ is normal in $D.$
\end{corollary}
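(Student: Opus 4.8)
The plan is to read Corollary \ref{cor:1} as the specialization of Theorem \ref{thm:1} in which the shared holomorphic function $h$ is non-vanishing and is, moreover, omitted by every $M[f]$; the crux is the elementary observation that \emph{omission is a vacuous instance of sharing}. First I would match the hypotheses of Theorem \ref{thm:1} to the present data. A non-vanishing holomorphic function $h$ satisfies $h\not\equiv 0$ and has no zeros in $D$, so the requirement that the zeros of $h$ have multiplicity at most $m$ is met vacuously; choosing $m=1$, the multiplicity condition that Theorem \ref{thm:1} imposes on the members of $\mathcal{F}$ becomes ``zeros of each $f\in\mathcal{F}$ have multiplicity at least $k+m+1=k+2$''.

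The key step is to translate the omission hypothesis into the sharing hypothesis of Theorem \ref{thm:1}. Fix arbitrary $f,g\in\mathcal{F}$. By assumption neither $M[f]-h$ nor $M[g]-h$ vanishes anywhere in $D$, so that
\[
Z(h,M[f])=\emptyset=Z(h,M[g]).
\]
In particular $Z(h,M[f])=Z(h,M[g])$, which is precisely the statement that $M[f]$ and $M[g]$ share $h$ in $D$ (indeed CM, the common preimage being empty). Thus every pair of functions drawn from $\mathcal{F}$ shares $h$, and the hypotheses of Theorem \ref{thm:1} are in place.

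With these observations I would then invoke Theorem \ref{thm:1} to conclude that $\mathcal{F}$ is normal in $D$. Because the corollary is only a specialization, no fresh analytic difficulty appears; the main thing to get right is the bookkeeping of hypotheses. Two transfer immediately --- the non-vanishing $h$ satisfies the condition on the target function with $m=1$, and omission forces the pairwise sharing to hold vacuously --- while the one requiring attention is the multiplicity condition on the zeros of the members of $\mathcal{F}$, which in the specialization $m=1$ reads ``multiplicity at least $k+2$'' and must be kept in force when the theorem is applied.
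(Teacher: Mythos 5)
Your reduction is exactly how the paper obtains Corollary \ref{cor:1}: it is presented as an ``immediate consequence'' of Theorem \ref{thm:1}, with the omission hypothesis making the pairwise sharing hold vacuously (both zero sets $Z(h,M[f])$ being empty) and the non-vanishing of $h$ satisfying the condition on the zeros of $h$ with $m=1$. The one point you rightly flag --- that Theorem \ref{thm:1} still requires the zeros of each $f\in\mathcal{F}$ to have multiplicity at least $k+m+1=k+2$, a hypothesis absent from the corollary's statement --- is a discrepancy inherited from the paper rather than a defect of your argument: as written, the deduction (yours and the paper's alike) establishes the corollary only for families satisfying that multiplicity restriction, and a genuinely unrestricted version would need a separate justification (e.g.\ that the Zalcman--Pang rescaling with $\alpha=\Gamma_M/\gamma_M-1$ remains admissible), which neither you nor the paper supplies.
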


It is noteworthy to mention that Corollary \ref{cor:1} leads to a counterexample to the converse of Bloch's principle (see \cite{bergweiler, charak-1, charak-2, charak-3, lahiri, li}) which states that if a family $\mathcal{F}\subset\mathcal{M}(D)$ satisfying a certain property $\mathcal{P}$ in $D$ is normal, then every $f\in\mathcal{M}(\mathbb{C})$ which satisfies property $\mathcal{P}$ in $\mathbb{C}$ reduces to a constant.

\begin{counterexample}
Let $D$ be any domain in $\mathbb{C}$ and let $\mathcal{F}=\left\{f_n: n\in\mathbb{N}\right\}$ be a family of meromorphic functions in $D$ given by $f_n(z)=e^{z}.$ Let $\mathcal{P}$ be the property that for each $f\in\mathcal{F},$ $M[f]-h$ has no zero in $D,$ where $h$ is a non-vanishing holomorphic function in $D.$ In view of Corollary \ref{cor:1}, the family $\mathcal{F}$ is normal in $D.$ Now, let $f(z)=e^{z},~M[f]=f^2f'$ and $h(z)=-e^{3z}.$ Then $f\in\mathcal{H}(\mathbb{C}),~h\in\mathcal{H}(\mathbb{C})$ and $h(z)\neq 0.$ Clearly, $M[f](z)-h(z)=2e^{3z}\neq 0,~\forall~z\in\mathbb{C}.$ Then $f$ satisfies property $\mathcal{P}$ in $\mathbb{C}.$ However, $f$ is non-constant. This violates the statement of the converse of Bloch's principle.
\end{counterexample}

\section{Some Value Distribution Results and Preliminary Lemmas}
In this section, we state and prove some  results which are crucial to prove the main results of this paper. Our first preliminary result is an extension of the famous Zalcman-Pang Lemma due to Chen and Gu \cite{chen-gu} (see also \cite[p. 216]{zalcman-2}, cf. \cite[Lemma 2]{pang-zalcman}).

\begin{lemma}[Zalcman-Pang Lemma]\label{lem:zp}
Let $\mathcal{F}\subset\mathcal{M}(D)$ be such that all of its zeros have multiplicities at least $m$ and all its poles have multiplicities at least $p.$ Let $-p<\alpha<m.$ If $\mathcal{F}$ is not normal at $z_0\in D,$ then there exist
sequences $\left\{f_n\right\}\subset\mathcal{F},$ $\left\{z_n\right\}\subset D$ satisfying $z_n\rightarrow z_0$ and positive numbers $\rho_n$ with $\rho_n\rightarrow 0$ such that the sequence $\left\{g_n\right\}$ defined by $$g_n(\zeta)=\rho_{n}^{-\alpha}f_n(z_n+\rho_n\zeta)\rightarrow g(\zeta)$$ locally uniformly in $\mathbb{C}$ with respect to the spherical metric, where $g$ is a non-constant meromorphic function on $\mathbb{C}$ such that for every $\zeta\in\mathbb{C},$ $g^{\#}(\zeta)\leq g^{\#}(0)=1.$
 \end{lemma}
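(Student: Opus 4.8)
The plan is to prove this by the classical Zalcman renormalization combined with Marty's criterion, following the scheme of Pang--Zalcman \cite{pang-zalcman} in the refinement of Chen and Gu \cite{chen-gu}. First I would localize: after a translation I may assume $z_0=0$, and I fix a closed disk $\overline{D(0,r)}\subset D$. Since $\mathcal{F}$ is not normal at $0$, Marty's criterion fails there, so the spherical derivatives are not locally bounded; passing to a subsequence, there exist $f_n\in\mathcal{F}$ with $\max_{|z|\le r}f_n^{\#}(z)\to\infty$ as $n\to\infty$. This supplies the ``blow-up'' of spherical derivatives that the whole renormalization feeds on.

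The heart of the argument is the renormalization, i.e. the selection of $z_n$ and $\rho_n$. In the classical case $\alpha=0$ this is immediate: let $z_n\in\overline{D(0,r)}$ be a point where $f_n^{\#}$ attains its maximum $L_n:=f_n^{\#}(z_n)\to\infty$ and set $\rho_n=1/L_n\to 0^{+}$. Then $g_n(\zeta)=f_n(z_n+\rho_n\zeta)$ satisfies $g_n^{\#}(\zeta)=\rho_n f_n^{\#}(z_n+\rho_n\zeta)\le \rho_n L_n=1=g_n^{\#}(0)$ on the disks $|\zeta|<(r-|z_n|)/\rho_n$, which exhaust $\mathbb{C}$. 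For general $\alpha\in(-p,m)$ the clean identity $g_n^{\#}=\rho_n f_n^{\#}$ breaks down because of the factor $\rho_n^{-\alpha}$ in $g_n(\zeta)=\rho_n^{-\alpha}f_n(z_n+\rho_n\zeta)$, so the points $z_n$ and scales $\rho_n$ must be chosen by the more delicate procedure of \cite{pang-zalcman,chen-gu}, arranged so that one still obtains $g_n^{\#}(0)=1$ and $g_n^{\#}(\zeta)\le 1$ on disks exhausting $\mathbb{C}$.

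Granting such $g_n$, the rest is routine. The uniform bound $g_n^{\#}\le 1$ together with Marty's criterion shows $\{g_n\}$ is normal on every disk $D(0,R)$, and a diagonal argument yields a subsequence converging spherically locally uniformly on $\mathbb{C}$ to a meromorphic $g$ with $g^{\#}(\zeta)\le g^{\#}(0)=1$. In particular $g^{\#}(0)=1\ne 0$ forces $g$ to be nonconstant, and the multiplicity bounds on zeros and poles of $g$ pass to the limit by Hurwitz's theorem.

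The hard part will be the selection step for $\alpha\ne 0$, and it is exactly here that the hypothesis $-p<\alpha<m$ is indispensable. Informally, the factor $\rho_n^{-\alpha}$ must neither blow up the limit near zeros nor collapse it near poles: near a zero $a$ of multiplicity $\mu\ge m$ one has $\rho_n^{-\alpha}f_n(a+\rho_n\zeta)\sim c\,\rho_n^{\mu-\alpha}\zeta^{\mu}$, which stays bounded precisely when $\alpha<m\le\mu$, while by the symmetry $g\mapsto 1/g$ a pole of multiplicity $\mu\ge p$ gives $\rho_n^{\alpha}/f_n(b+\rho_n\zeta)\sim c'\rho_n^{\alpha+\mu}\zeta^{\mu}$, which keeps $g$ from collapsing to $0$ precisely when $\alpha>-p\ge-\mu$. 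Thus the range $-p<\alpha<m$ is exactly what guarantees, simultaneously, that $\rho_n$ can be taken to tend to $0$ and that the normalized limit $g$ is a genuine nonconstant meromorphic function on $\mathbb{C}$ rather than degenerating to $0$ or to $\infty$; once $g^{\#}(0)=1$ is secured, nonconstancy is automatic.
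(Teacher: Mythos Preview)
The paper does not prove this lemma at all: it is stated as a preliminary result and attributed to Chen and Gu \cite{chen-gu} (with cross-references to \cite{zalcman-2} and \cite{pang-zalcman}), with no argument given. Your sketch follows precisely the classical Zalcman--Pang renormalization from those same sources, so there is no divergence of approach to compare.

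As a sketch your outline is accurate, but note that you have not actually carried out the ``hard part'' you yourself flag: the selection of $z_n$ and $\rho_n$ when $\alpha\ne 0$ (typically done by maximizing an auxiliary quantity such as $(r-|z|)^{\alpha}f_n^{\#}(z)$ in the Pang--Zalcman scheme, or via the Chen--Gu refinement) is only gestured at, and your heuristic about $\rho_n^{\mu-\alpha}$ near zeros and poles explains why the range $-p<\alpha<m$ is natural but does not by itself produce the required sequences with $g_n^{\#}(0)=1$ and $g_n^{\#}\le 1$ on exhausting disks. Since the paper simply cites the literature for this lemma, deferring that step to \cite{chen-gu,pang-zalcman} is consistent with the paper's treatment; but if you intend this as a self-contained proof, that maximization argument must be written out.
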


The following lemma is due to Hayman (see \cite[Theorem 3]{hayman-2}, cf. \cite[Theorem 3.5]{hayman}) commonly known as ``Hayman's Alternative": 

\begin{lemma}\label{lem:h}
Let $f\in\mathcal{M}(\mathbb{C})$ be a non-constant function and $k$ be a positive integer. Then  for any $a\in\mathbb{C}\setminus\left\{0\right\},$ either $f$ or $f^{(k)}-a$ has at least one zero in $\mathbb{C}.$ Moreover, if $f$ is transcendental, then $f$ or $f^{(k)}-a$ has infinitely many zeros in $\mathbb{C}.$ 
\end{lemma}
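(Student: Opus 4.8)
The plan is to deduce both assertions from a single Hayman-type fundamental inequality that bounds the Nevanlinna characteristic of $f$ by the counting functions of exactly the zeros the statement concerns. Concretely, I would aim to prove that for a non-constant meromorphic $f$, a positive integer $k$, and $a\neq 0$,
\[ T(r,f)\le c_1\,N\!\left(r,\frac{1}{f}\right)+c_2\,N\!\left(r,\frac{1}{f^{(k)}-a}\right)+S(r,f), \]
with constants $c_1,c_2$ depending only on $k$ (Hayman's values being $2+\frac1k$ and $2+\frac2k$), where $S(r,f)=o(T(r,f))$ as $r\to\infty$ outside a set of finite linear measure. Granting this inequality, the transcendental case of the lemma follows at once: if both $f$ and $f^{(k)}-a$ had only finitely many zeros, the two counting terms would be $O(\log r)$, forcing $T(r,f)=O(\log r)+S(r,f)$ and contradicting transcendence. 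The same reasoning, retaining only the zero-counting terms, yields the weaker ``at least one zero'' conclusion whenever $f$ is transcendental.

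For the derivation of the inequality, the engine is Nevanlinna's Second Main Theorem applied to $g:=f^{(k)}$ with the three targets $0$, $a$, and $\infty$, combined with the Lemma on the Logarithmic Derivative. The latter gives $m\!\left(r,\frac{f^{(k)}}{f}\right)=S(r,f)$, which lets me transfer the proximity function of $1/f$ to that of $1/f^{(k)}$ via $m(r,1/f)\le m(r,1/f^{(k)})+S(r,f)$; the Second Main Theorem then converts the proximity contributions at $0$ and $a$ into the desired zero-counting terms plus the characteristic of $g$. The delicate bookkeeping is the pole contribution: a pole of $f$ of order $p$ is a pole of $f^{(k)}$ of order $p+k$, so the ramification term of the Second Main Theorem must be made to absorb $N(r,f)$ against $T(r,g)$ and return a bound in terms of $T(r,f)$. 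Relating $T(r,f^{(k)})$ back to $T(r,f)$ is again handled through the logarithmic derivative lemma. I expect this pole bookkeeping---tracking how the ramification term interacts with the poles of $f$ so that the characteristic of $f^{(k)}$ can be replaced by that of $f$---to be the main obstacle.

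The inequality argument only settles the transcendental case, since $S(r,f)=o(T(r,f))$ is automatic only there; the residual ``at least one zero'' claim for rational $f$ I would treat by a direct valence argument. A non-constant polynomial already has a zero by the fundamental theorem of algebra, so I may assume $f$ is rational with a pole; if moreover $f$ is zero-free, then its numerator is constant, whence $f^{(k)}\to 0$ at $\infty$ and $R:=f^{(k)}-a$ satisfies $R(\infty)=-a\neq 0$. Viewing $R$ as a map of the Riemann sphere, its zeros and poles are equinumerous with multiplicity; since $\infty$ is neither a zero nor a pole of $R$ and $R$ inherits the nonempty pole set of $f^{(k)}$, the function $R$ must have a zero in $\mathbb{C}$. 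This disposes of the rational case and, together with the fundamental inequality, completes the proof.
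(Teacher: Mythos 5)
The paper offers no proof of this lemma at all---it is quoted verbatim from Hayman's 1959 theorem (the reference given alongside the statement)---and your plan reconstructs exactly the standard proof of that cited result: Hayman's fundamental inequality $T(r,f)\le\left(2+\tfrac{1}{k}\right)N\!\left(r,\tfrac{1}{f}\right)+\left(2+\tfrac{2}{k}\right)N\!\left(r,\tfrac{1}{f^{(k)}-a}\right)+S(r,f)$ disposes of the transcendental case, and a degree count on the Riemann sphere disposes of the rational case. Your outline is sound, including the elementary argument that a zero-free non-constant rational $f$ must have constant numerator, so that $f^{(k)}-a$ has a pole in $\mathbb{C}$ but tends to the finite non-zero value $-a$ at $\infty$, forcing a zero in $\mathbb{C}$.
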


\begin{lemma}(\cite[Lemma 6]{wang}, cf. \cite[Theorem 3]{bergo})\label{lem:w1}
Let $f\in\mathcal{M}(\mathbb{C})$ be a transcendental function of finite order having zeros of multiplicity at least $k+1,$ where $k\in\mathbb{N}.$ Then for $j=1,\ldots,k,$ $f^{(j)}$ assumes every non-zero complex number infinitely many times. 
\end{lemma}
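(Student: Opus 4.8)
The plan is to fix an integer $j$ with $1\le j\le k$ and a nonzero value $a$, and to prove that $f^{(j)}-a$ has infinitely many zeros; since $j$ and $a\neq 0$ are arbitrary, this is exactly the assertion of the lemma. I argue by contradiction, supposing $f^{(j)}-a$ has only finitely many zeros. The finite order of $f$ enters immediately in three ways: $f^{(j)}$ is again of finite order, every Nevanlinna remainder obeys $S(r,f)=O(\log r)$ with \emph{no} exceptional set, and the contradiction hypothesis reads $N\!\left(r,1/(f^{(j)}-a)\right)=O(\log r)=S(r,f)$. I would first dispose of the case in which $f$ itself has only finitely many zeros: applying Hayman's Alternative (Lemma \ref{lem:h}) with $k$ replaced by $j$ then forces $f^{(j)}-a$ to have infinitely many zeros, a contradiction. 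Hence from now on $f$ may be assumed to have infinitely many zeros, each of multiplicity at least $k+1$.

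Next I would invoke the Hayman--Milloux fundamental inequality with differential order $j$,
\[
T(r,f)\le \overline N(r,f)+N\!\left(r,\tfrac1f\right)+N\!\left(r,\tfrac{1}{f^{(j)}-a}\right)-N\!\left(r,\tfrac{1}{f^{(j+1)}}\right)+S(r,f),
\]
in which the third term is $S(r,f)$ by assumption. The multiplicity hypothesis is used to compare the remaining two counting functions: every zero $z_0$ of $f$ has multiplicity $\mu(z_0)\ge k+1\ge j+1$, so it contributes $\mu(z_0)$ to $N(r,1/f)$ and $\max\{\mu(z_0)-(j+1),0\}$ to $N(r,1/f^{(j+1)})$, a difference of exactly $j+1$, while the further zeros of $f^{(j+1)}$ only enlarge the subtracted term. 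This yields
\[
N\!\left(r,\tfrac1f\right)-N\!\left(r,\tfrac{1}{f^{(j+1)}}\right)\le (j+1)\,\overline N\!\left(r,\tfrac1f\right)\le \tfrac{j+1}{k+1}\,N\!\left(r,\tfrac1f\right)\le \tfrac{j+1}{k+1}\,T(r,f)+O(1),
\]
and therefore $\tfrac{k-j}{k+1}\,T(r,f)\le \overline N(r,f)+S(r,f)$. Thus the assumption that $f^{(j)}-a$ has finitely many zeros already forces $f$ to have \emph{many} poles, and in the borderline case $j=k$ this inequality degenerates and gives nothing.

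The main obstacle is precisely this pole contribution, which the hypotheses leave entirely unrestricted, together with the degenerate case $j=k$; this is where the finite order becomes indispensable. Here I would appeal to the Bergweiler--Eremenko theory of the singularities of the inverse function \cite{bergo}: for a meromorphic function of finite order the logarithmic singularities of $f^{-1}$ lying over finite values are finite in number and satisfy a bound of Denjoy--Carleman--Ahlfors type, which converts the statement that $f^{(j)}-a$ vanishes only finitely often into a genuine structural restriction on the poles and growth of $f$. One also exploits that every pole of $f^{(j)}$ has multiplicity at least $j+1$, so that $\overline N(r,f^{(j)})\le \tfrac{1}{j+1}T(r,f^{(j)})+O(1)$, a fact feeding directly into that finite-order estimate. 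Combining this input with the inequality $\tfrac{k-j}{k+1}T(r,f)\le \overline N(r,f)+S(r,f)$ produces the sought contradiction once $r$ is large.

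I expect the delicate point to be exactly the uniform control of the poles: the Nevanlinna estimates above reduce the whole problem to ruling out functions whose poles are so numerous that $T(r,f)$ is comparable to $\overline N(r,f)$ (and, when $j=k$, they give no information whatsoever), so the entire burden falls on the finite-order structure. Turning the finite-order hypothesis into a quantitative restriction on the poles — through the geometry of the inverse function — is where the real work lies, and is the reason the statement is confined to functions of finite order. Once the contradiction has been reached for each fixed $j\in\{1,\dots,k\}$ and each $a\neq 0$, the conclusion that $f^{(j)}$ assumes every nonzero value infinitely often follows.
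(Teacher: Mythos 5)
This lemma is not proved in the paper at all: it is quoted verbatim from Wang--Fang (\cite[Lemma 6]{wang}), which in turn rests on \cite[Theorem 3]{bergo}, so there is no in-paper argument to compare against. Judged on its own terms, your proposal has a genuine gap. The easy half is fine: if $f$ has only finitely many zeros, Lemma \ref{lem:h} with $k$ replaced by $j$ forces $f^{(j)}-a$ to have infinitely many zeros. The Milloux-type estimate is also correctly executed, but, as you yourself observe, it terminates in $\frac{k-j}{k+1}\,T(r,f)\le \overline N(r,f)+S(r,f)$, which says nothing when $j=k$ and leaves $\overline N(r,f)$ completely uncontrolled when $j<k$. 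Everything therefore hinges on the final paragraph, and there you only gesture at ``the Bergweiler--Eremenko theory of the singularities of the inverse function'' without stating which theorem you are using, what its hypotheses are, or how it combines with your inequality to produce a contradiction. Declaring that ``this is where the real work lies'' is an accurate diagnosis, but it means the proof has not been given: the decisive case is delegated to an unspecified black box.

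The concrete mechanism you are missing is that \cite[Theorem 3]{bergo} can be applied not to $f$ but to $g:=f^{(j-1)}$. That theorem asserts that a transcendental meromorphic function of finite order with infinitely many \emph{multiple} zeros has the property that its derivative assumes every nonzero value infinitely often. Since $f$ has finite order, so does $g$, and at each of the (infinitely many, after your Hayman reduction) zeros of $f$, which have multiplicity at least $k+1$, the function $g$ has a zero of multiplicity at least $k+1-(j-1)=k-j+2\ge 2$. Hence $g$ has infinitely many multiple zeros, and $g'=f^{(j)}$ assumes every nonzero value infinitely often --- uniformly in $j$, including the borderline case $j=k$, and with no need for the Milloux inequality at all. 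Until the appeal to finite order is made concrete in this (or some equivalent) way, the proposal does not establish the lemma.
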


\begin{lemma}\label{lem:lo}\cite[Lemma 1.2]{lo}
Let $f_j~(j=1,~2)$ be two non-constant meromorphic functions in $\mathbb{C}.$ Then $$N(r,f_1f_2)-N\left(r,\frac{1}{f_1f_2}\right)=N(r,f_1)+N(r,f_2)-N\left(r,\frac{1}{f_1}\right)-N\left(r,\frac{1}{f_2}\right).$$
\end{lemma}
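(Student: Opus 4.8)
The plan is to reduce the identity to the additivity of orders (divisors) under multiplication. For a meromorphic function $f$ and a point $z_0\in\mathbb{C},$ write $\nu_f(z_0)$ for the order of $f$ at $z_0,$ counted positively when $f$ has a zero there and negatively when $f$ has a pole. The guiding observation is that the difference of counting functions records exactly this signed order: the contribution of a point $z_0$ with $|z_0|\le t$ to $n(t,1/f)$ is $\max\{\nu_f(z_0),0\}$ and to $n(t,f)$ is $\max\{-\nu_f(z_0),0\},$ so the signed counting function $n(t,1/f)-n(t,f)=\sum_{|z_0|\le t}\nu_f(z_0)$ simply sums the orders over the disk.

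First I would record the pointwise identity $\nu_{f_1f_2}(z_0)=\nu_{f_1}(z_0)+\nu_{f_2}(z_0),$ valid at every $z_0.$ This is immediate from the local representation $f(z)=(z-z_0)^{\nu}u(z)$ with $u$ holomorphic and non-vanishing near $z_0$: multiplying two such forms adds their exponents. Summing this over $|z_0|\le t$ gives $n(t,1/(f_1f_2))-n(t,f_1f_2)=\left[n(t,1/f_1)-n(t,f_1)\right]+\left[n(t,1/f_2)-n(t,f_2)\right].$

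Next, applying the standard integral transform $n\mapsto N$ (together with the usual normalization $\nu_f(0)\log r$ at the origin, which is itself additive since $\nu_{f_1f_2}(0)=\nu_{f_1}(0)+\nu_{f_2}(0)$) turns the last display into
$$N\left(r,\frac{1}{f_1f_2}\right)-N(r,f_1f_2)=\left[N\left(r,\frac{1}{f_1}\right)-N(r,f_1)\right]+\left[N\left(r,\frac{1}{f_2}\right)-N(r,f_2)\right],$$
and rearranging the terms yields exactly the asserted relation.

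The one place demanding care is cancellation: when a zero of $f_1$ meets a pole of $f_2$ (or conversely), the product $f_1f_2$ may acquire a zero of lower order, a pole, or a regular non-zero value at that point, so neither $N(r,f_1f_2)$ nor $N(r,1/(f_1f_2))$ is individually additive. The point I would stress is that the additivity $\nu_{f_1f_2}=\nu_{f_1}+\nu_{f_2}$ holds regardless of the signs of the individual orders, and it is precisely this signed additivity that survives cancellation and makes the \emph{difference} of the two counting functions additive. Everything else is the routine passage from local order data to the Nevanlinna counting function.
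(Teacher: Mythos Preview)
Your argument is correct. The paper does not actually supply a proof of this lemma; it is quoted verbatim from \cite[Lemma 1.2]{lo} and used as a black box later in the proof of Lemma~\ref{lem:2}. What you have written is precisely the standard textbook proof: reduce to the additivity of the local order $\nu_{f_1f_2}=\nu_{f_1}+\nu_{f_2}$, pass to the signed unintegrated counting function, and integrate. Your remark about cancellation at coincident zeros and poles is the only genuinely non-obvious point, and you handle it correctly by working with the signed order rather than with zeros and poles separately.
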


\begin{lemma}\label{singh}\cite[Lemma 2.1]{singh}
Let $n_0,n_1,\ldots,n_k, m$ be non-negative integers with $n_0\geq 2,~\sum\limits_{j=1}^{k}n_j\geq 1$ and $k\in\mathbb{N}.$ Let $p~(\not\equiv 0)$ be a polynomial of degree $m$ and $f$ be a non-constant rational function with zeros of multiplicity at least $k+m$ and poles of multiplicity at least $m+1.$ Let $M[f]=\prod\limits_{j=0}^{k}\left(f^{(j)}\right)^{n_j}$ be a differential monomial of $f.$ Then $M[f]-p$ has at least two distinct zeros in $\mathbb{C}.$
\end{lemma}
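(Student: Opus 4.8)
The plan is to argue by contradiction, assuming $M[f]-p$ has at most one distinct zero in $\mathbb{C}$. Write $\gamma=\sum_{j=0}^{k}n_j$ and $\Gamma=\sum_{j=0}^{k}(j+1)n_j$, so $\gamma\geq 3$. The arithmetic backbone is the behaviour of $M[f]$ at the zeros and poles of $f$: at a zero $\alpha$ of $f$ of multiplicity $p_\alpha\geq k+m$ each $f^{(j)}$ vanishes to order $p_\alpha-j>0$, whence $M[f]$ vanishes there to order $\gamma p_\alpha-(\Gamma-\gamma)\geq \gamma m+kn_0\geq 3m+2$, while at a pole $\beta$ of order $q_\beta\geq m+1$ the monomial $M[f]$ has a pole of order $\gamma q_\beta+(\Gamma-\gamma)$. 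The decisive feature is that \emph{every} zero of $f$ is a zero of $M[f]$ of order exceeding $m+1$; I would record these local orders at the outset.

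First I would dispose of the case where $f$ is a polynomial. Here $M[f]$ is a polynomial of degree $\delta=\gamma\deg f-(\Gamma-\gamma)>m$, so the single-zero hypothesis forces $M[f]-p=c_0(z-z_0)^{\delta}$. Taking the logarithmic derivative yields the polynomial identity
\[(z-z_0)M[f]'-\delta M[f]=(z-z_0)p'-\delta p,\]
whose right-hand side has degree at most $m$. At every zero $\alpha_i\neq z_0$ of $f$ the left-hand side vanishes to order $(\gamma p_i-(\Gamma-\gamma))-1\geq 3m+1>m$, since $M[f]$ and $M[f]'$ both vanish there; hence $f$ has no zero other than $z_0$. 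Thus $f=c(z-z_0)^{p_1}$, a direct computation gives $M[f]=C(z-z_0)^{\delta}$, and then $p\equiv(C-c_0)(z-z_0)^{\delta}$ contradicts $\deg p=m<\delta$ together with $p\not\equiv 0$.

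For the case where $f$ has poles I would run the same device globally. Writing $M[f]-p=c_0(z-z_0)^{\lambda}/\prod_{l}(z-\beta_l)^{\rho_l}$ with $\rho_l=\gamma q_l+(\Gamma-\gamma)$ and $\lambda\geq 0$, forming $(M[f]-p)'/(M[f]-p)$ and clearing the zero term produces
\[(z-z_0)M[f]'-T\,M[f]=(z-z_0)p'-T\,p,\qquad T=\lambda-(z-z_0)\sum_{l}\frac{\rho_l}{z-\beta_l},\]
where $T$ is holomorphic off the poles $\beta_l$. As before the left side vanishes to order $>m$ at each zero $\alpha_i\neq z_0$ of $f$, whereas the right side is a rational function whose only finite poles are simple poles at the $\beta_l$; an elementary count of zeros and poles on the Riemann sphere shows it has at most $t+m$ zeros, where $t$ is the number of distinct poles of $f$. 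This yields $(s-1)(\gamma m+kn_0-1)\leq t+m$, with $s$ the number of distinct zeros of $f$.

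The main obstacle is to contradict this last inequality, which requires controlling the number $t$ of distinct poles — exactly where the hypothesis $q_l\geq m+1$ and the global balance of $M[f]-p$ must be exploited. Equating the total zero and pole multiplicities of $M[f]-p$ on the sphere forces the solitary zero $z_0$ to carry the large multiplicity $\lambda=\sum_{l}\rho_l+\max(e,m)$ (in the principal sub-case), where $e=\gamma(N_0-N_\infty)-(\Gamma-\gamma)$ measures the behaviour at infinity and $N_0,N_\infty$ are the total multiplicities of the zeros and poles of $f$; combined with the largeness of each $\rho_l\geq\gamma(m+1)+(\Gamma-\gamma)$ and a parallel count for $M[f]'$, this over-determination is what I expect to pin $t$ and close the argument. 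I anticipate that this pole bookkeeping, organised into the sub-cases $\lambda=0$ versus $\lambda\geq 1$ and according to the sign of $e$ (that is, whether $M[f]-p$ has a pole or a zero at infinity), will be the technical heart of the proof, the polynomial case and the reduction above being comparatively routine.
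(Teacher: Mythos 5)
Your polynomial case is complete and correct, but the case where $f$ has poles --- which is the substance of the lemma --- is not actually proved. (Note that the paper itself does not prove this statement; it is imported from \cite[Lemma 2.1]{singh}, so the only in-paper benchmark is the proof of Lemma \ref{lem:1}, which proceeds by Chang's expansion as $r\to 0$ and a Vandermonde-determinant argument rather than by your degree count.) The concrete gap is this: the inequality $(s-1)(\gamma m+kn_0-1)\leq t+m$ that you reach cannot, by itself, be contradicted. It is vacuous for $s\leq 1$; in particular it says nothing when $f$ is zero-free, a sub-case permitted by the hypotheses (the multiplicity condition on zeros is vacuous for a non-vanishing $f$ such as $1/(z-\beta)^{m+1}$) and one that is genuinely hard --- it is essentially Lemma \ref{lem:1}, whose proof occupies several pages. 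Even for $s=1$ or $s=2$ the inequality merely forces $t$ to be moderately large, and you supply no mechanism that caps $t$: the closing paragraph, where you ``expect'' the balance $\lambda=\sum_l\rho_l+\max(e,m)$ together with $\rho_l\geq\gamma(m+1)+(\Gamma-\gamma)$ to ``pin $t$ and close the argument,'' is a statement of intent, not an argument. The largeness of each $\rho_l$ forces $\lambda$ to be large, but $t$ enters your zero count of the right-hand side only linearly with coefficient $1$, and no collision between the two counts is exhibited.

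Two further points you would need to address in any completion. First, before asserting that $(z-z_0)p'-Tp$ has at most $t+m$ zeros you must rule out $(z-z_0)p'-Tp\equiv 0$; this does follow (it would force $M[f]$ to be a constant multiple of $p$, hence pole-free), but it is not said. Second, your bookkeeping ignores the zeros of $M[f]$ that are not zeros of $f$ (zeros of the derivatives $f^{(j)}$ occurring away from the zeros of $f$); in the paper's proof of Lemma \ref{lem:1} these are exactly the terms --- the polynomial $g$ of degree $(t-1)(\Gamma_M-\gamma_M)$ in \eqref{eq:l1.3} --- that carry the final count, and any argument in the spirit you propose will have to account for them.
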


\begin{lemma}\label{lem:1}
Let $f$ be a non-constant and non-vanishing rational function in $\mathbb{C}$ and let $p~(\not\equiv 0)$ be a polynomial. Let $M[f]=\prod\limits_{j=0}^{k}\left(f^{(j)}\right)^{n_j}$ be a differential monomial of $f,$ where $k\in\mathbb{N}$ and $n_j$'s ($j=0,1,\ldots,k$) are non-negative integers such that $\sum\limits_{j=0}^{k}n_j\geq 1.$ Then $M[f]-p$ has at least $\Gamma_M$ distinct zeros in $\mathbb{C}.$  
\end{lemma}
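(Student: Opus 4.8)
The plan is to use that a non-constant, non-vanishing rational function is extremely rigid, and then to convert the assertion into a ramification count for an associated rational self-map of $\widehat{\mathbb{C}}$.

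First I would record the shape of $f$. Having no zeros and being non-constant, $f$ cannot be a polynomial, so $f=c/Q$ with $c\neq 0$ constant and $Q$ a polynomial of degree $d\geq 1$; let $b_1,\dots,b_s$ be the distinct zeros of $Q$, with multiplicities $q_1,\dots,q_s$ and $\sum_i q_i=d$. A local computation at $b_i$ gives that $f^{(j)}$ has a pole of order $q_i+j$, so $M[f]$ has a pole of order $\sum_j n_j(q_i+j)=\Gamma_M+(q_i-1)\gamma_M\ge\Gamma_M$ there. Writing $f^{(j)}=cA_j/Q^{j+1}$ yields $M[f]=c^{\gamma_M}A/Q^{\Gamma_M}$ with $A=\prod_j A_j^{n_j}$ of degree $(d-1)(\Gamma_M-\gamma_M)$, whose finite zeros off the $b_i$ number at most $(s-1)(\Gamma_M-\gamma_M)$; near $\infty$ one finds $M[f]\sim z^{-\Gamma_M-(d-1)\gamma_M}\to 0$. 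In particular $M[f]$ genuinely has poles, so $u:=M[f]/p$ is a \emph{non-constant} rational function.

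The key reduction is that every distinct $1$-point of $u$ is a distinct zero of $M[f]-p$ (if $u(z_0)=1$ then $M[f](z_0)=p(z_0)$, even when $p(z_0)=0$), so it suffices to bound $\overline{n}(1)$, the number of distinct $1$-points of $u$ over $\widehat{\mathbb{C}}$, from below. I would apply the sharp second fundamental theorem for rational maps (equivalently the Riemann--Hurwitz formula) to $u$ with the three values $0,1,\infty$, in the form $\overline{n}(0)+\overline{n}(1)+\overline{n}(\infty)\ge \deg u+2$. Counting over $\widehat{\mathbb{C}}$ one gets three facts, all insensitive to how $p$ interacts with the data of $M[f]$: the total pole order is $\deg u=s\Gamma_M+(d-s)\gamma_M+\delta$ with $\delta=\deg p$ (dividing by $p$ only redistributes orders among the $b_i$ and the zeros of $p$); the poles of $u$ lie only at the $b_i$ and at zeros of $p$, so $\overline{n}(\infty)\le s+\delta$; and the zeros of $u$ consist of the finite zeros of $M[f]$ off the $b_i$ together with the forced zero at $\infty$, so $\overline{n}(0)\le (s-1)(\Gamma_M-\gamma_M)+1$. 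Substituting these three estimates, the terms telescope to
\[
\overline{n}(1)\ \geq\ \deg u+2-\overline{n}(0)-\overline{n}(\infty)\ \geq\ \Gamma_M+(d-1)\gamma_M+1-s\ \geq\ \Gamma_M,
\]
the last step using $\gamma_M\ge 1$ and $s\le d$, so that $(d-1)\gamma_M+1-s\ge d-s\ge 0$. Since $M[f]-p$ has at least $\overline{n}(1)\ge\Gamma_M$ distinct zeros, this is the claim.

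The main obstacle I anticipate is getting the global accounting exactly right rather than any single estimate. The most delicate point is that $u$ carries a zero (not a pole) at $\infty$, of order $\Gamma_M+(d-1)\gamma_M+\delta$: omitting this single $\infty$-point lowers $\overline{n}(0)$ by one and destroys the bound, as the extremal case $f=1/z$, $M[f]=z^{-\Gamma_M}$, $p$ constant — where exactly $\Gamma_M$ distinct zeros occur — shows. Equally essential is invoking the \emph{sharp} form of the theorem with the ``$+2$'' coming from ramification; the crude estimate $\deg u\le\overline{n}(0)+\overline{n}(1)+\overline{n}(\infty)$ is too weak and already fails in the case $d=s=1$. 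I would also verify carefully that $u$ is non-constant (forced by the genuine poles of $M[f]$) and that the degree and the two upper bounds above are indeed unaffected by coincidences of $p$ with the poles or critical points of $M[f]$, which is what makes the argument uniform in $p$.
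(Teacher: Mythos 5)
Your route is genuinely different from the paper's. The paper follows Chang's method: it writes $M[f]=g(z)/\prod_i(z+z_i)^{(t_i-1)\gamma_M+\Gamma_M}$, posits the zero set of $M[f]-p$, converts the resulting polynomial identity into an asymptotic expansion of a logarithmic derivative at $r=0$, and compares Taylor coefficients to obtain a homogeneous linear system whose coefficient matrix is a Vandermonde determinant in the distinct points; its nonvanishing forces $\Gamma_M+(T-1)\gamma_M+D<t+d+s$ and hence $s\ge\Gamma_M$. You package the same multiplicity-versus-distinctness bookkeeping into the Riemann--Hurwitz inequality $\overline{n}(0)+\overline{n}(1)+\overline{n}(\infty)\ge\deg u+2$ for $u=M[f]/p$, which is shorter and more conceptual. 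Your local computations are all correct: the pole order $\Gamma_M+(q_i-1)\gamma_M$ at each $b_i$, $\deg A=(d-1)(\Gamma_M-\gamma_M)$, the bound $(s-1)(\Gamma_M-\gamma_M)$ on the finite zeros of $M[f]$ off the $b_i$, the zero of order $\Gamma_M+(d-1)\gamma_M+\delta$ at $\infty$, the nonconstancy of $u$, and the reduction of zeros of $M[f]-p$ to $1$-points of $u$.

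There is, however, one concrete error, sitting exactly at the point you defer to a final ``careful verification'': the claims that $\deg u=s\Gamma_M+(d-s)\gamma_M+\delta$ and that the accounting is unaffected by coincidences of $p$ with the data of $M[f]$. Write $M[f]=N/E$ in lowest terms, with $E=\prod_i(z-b_i)^{\Gamma_M+(q_i-1)\gamma_M}$ and $\deg N=(s-1)(\Gamma_M-\gamma_M)$. If $G:=\gcd(N,p)$ is nonconstant, i.e.\ $p$ vanishes at a finite zero of $M[f]$, then $u=(N/G)/\bigl((p/G)E\bigr)$ in lowest terms and $\deg u=s\Gamma_M+(d-s)\gamma_M+\delta-\deg G$, strictly smaller than you claim; since Riemann--Hurwitz bounds $\overline{n}(1)$ from below by the \emph{actual} degree, your displayed chain is not justified as written. (For instance $f=1/\bigl((z-1)^3+c\bigr)$, $M[f]=f'$, $p=(z-1)^2$ gives $\deg G=2$.) The repair is immediate and in fact strengthens the conclusion: with the reduced fraction one has $\overline{n}(0)\le\deg(N/G)+1=(s-1)(\Gamma_M-\gamma_M)-\deg G+1$ and $\overline{n}(\infty)\le s+\deg(p/G)=s+\delta-\deg G$, so the two upper bounds each drop by $\deg G$ while the degree drops by $\deg G$ only once, yielding $\overline{n}(1)\ge\Gamma_M+(d-1)\gamma_M+1-s+\deg G\ge\Gamma_M$. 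With that single correction your proof is complete and is a valid alternative to the paper's argument.
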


\begin{proof}
The proof is based on the method of Chang \cite{chang} (see also \cite{deng}) with significant modifications. Since the computations involved in the proof are intricate, we describe the proof in detail.

Since $f$ is non-vanishing, $f$ cannot be a polynomial and so $f$ has at least one pole. Thus we can write
\begin{equation}\label{eq:l1.1}
f(z)=\frac{A}{\prod_{i=1}^{t}(z+z_i)^{t_i}}
\end{equation}
and 
\begin{equation}\label{eq:l1.2}
p(z)=B\prod\limits_{i=1}^{d}(z+ v_i)^{d_i},
\end{equation}
where $A,~B$ are non-zero constants, $d,~t,~t_i$ are positive integers and $d_i$ are non-negative integers. Also, $v_i$ (when $1\leq i\leq d$) are distinct complex numbers and $z_i$ (when $1\leq i\leq t$) are distinct complex numbers.

From \eqref{eq:l1.1}, we deduce that 
\begin{equation}\label{eq:l1.3}
M[f](z)=\frac{g(z)}{\prod_{i=1}^{t}(z+z_i)^{(t_i-1)\gamma_M + \Gamma_M}},
\end{equation}
where  $g$ is a polynomial of degree $(t-1)(\Gamma_M-\gamma_M).$

Also, by assumption, it easily follows that $M[f]-p$ has at least one zero in $\mathbb{C}.$ Thus we may set
\begin{equation}\label{eq:l1.4}
M[f](z)=p(z)+\frac{C\prod\limits_{i=1}^{s}(z+w_i)^{l_i}}{\prod_{i=1}^{t}(z+z_i)^{(t_i-1)\gamma_M + \Gamma_M}},
\end{equation}
where $C(\neq 0)\in\mathbb{C},$ $l_i$ are positive integers and $w_i$ ($1\leq i\leq s$) are distinct complex numbers.

Let $D=\sum\limits_{i=1}^{d}d_i$ and $T=\sum\limits_{i=1}^{t}t_i.$ Then from \eqref{eq:l1.2}, \eqref{eq:l1.3} and \eqref{eq:l1.4}, we get
\begin{equation}\label{eq:l1.5}
B\prod\limits_{i=1}^{d}(z+v_i)^{d_i}\prod_{i=1}^{t}(z+z_i)^{(t_i-1)\gamma_M + \Gamma_M} + C\prod\limits_{i=1}^{s}(z+w_i)^{l_i}=g(z)
\end{equation}
By simple calculation, one can easily see that $$\sum\limits_{i=1}^{s}l_i=\sum\limits_{i=1}^{t}\left[(t_i-1)\gamma_M + \Gamma_M\right] + \sum\limits_{i=1}^{d}d_i=(T-t)\gamma_M +t\Gamma_M + D$$ and $C=-B.$

Also, from \eqref{eq:l1.5}, we get

$$\prod\limits_{i=1}^{d}(1+v_ir)^{d_i}\prod_{i=1}^{t}(1+z_ir)^{(t_i-1)\gamma_M + \Gamma_M} - \prod\limits_{i=1}^{s}(1+w_ir)^{l_i}=r^{\Gamma_M+(T-1)\gamma_M + D}h(r),$$ where $h(r):=r^{(t-1)(\Gamma_M-\gamma_M)}g(1/r)/B$ is a polynomial of degree less than $(t-1)(\Gamma_M-\gamma_M).$ Furthermore, it follows that
\begin{align}\label{eq:l1.6}
\frac{\prod\limits_{i=1}^{d}(1+v_ir)^{d_i}\prod\limits_{i=1}^{t}(1+z_ir)^{(t_i-1)\gamma_M + \Gamma_M}}{\prod\limits_{i=1}^{s}(1+w_ir)^{l_i}}&=1+\frac{r^{\Gamma_M+(T-1)\gamma_M + D}h(r)}{\prod\limits_{i=1}^{s}(1+w_ir)^{l_i}}\nonumber\\ 
 &=1+ O\left(r^{\Gamma_M+(T-1)\gamma_M + D}\right) 
\end{align}
as $r\rightarrow 0.$ Taking logarithmic derivatives of both sides of \eqref{eq:l1.6}, we obtain
\begin{equation}\label{eq:l1.7}
\sum\limits_{i=1}^{d}\frac{d_iv_i}{1+v_ir}+ \sum\limits_{i=1}^{t}\frac{\left[(t_i-1)\gamma_M + \Gamma_M\right]z_i}{1+z_ir}-\sum\limits_{i=1}^{s}\frac{l_iw_i}{1+w_ir}=O\left(r^{\Gamma_M+(T-1)\gamma_M + D-1}\right)
\end{equation}
as $r\rightarrow 0.$

Let $S_1=\left\{v_1,v_2,\ldots,v_d\right\}\cap\left\{z_1,z_2,\ldots,z_t\right\}$ and $S_2=\left\{v_1,~v_2,\ldots,v_d\right\}\cap\left\{w_1,w_2,\ldots,w_s\right\}.$ Then the following four cases arise:\\

{\bf  Case 1:} $S_1=S_2=\phi$. Let $z_{t+i}=v_i$ when $1\leq i\leq d$ and $$T_i=\left\{\begin{array}{cc} (t_i-1)\gamma_M + \Gamma_M & \mbox{if}~ 1\leq i\leq t,\\ d_{i-t} & \mbox{if}~ t+1\leq i\leq t+d.\end{array}\right.$$ Then \eqref{eq:l1.7} can be written as 
\begin{equation}\label{eq:l1.8}
\sum\limits_{i=1}^{t+d}\frac{T_iz_i}{1+z_ir}-\sum\limits_{i=1}^{s}\frac{l_iw_i}{1+w_ir}=O\left(r^{\Gamma_M+(T-1)\gamma_M + D-1}\right)~\mbox{as $r\rightarrow 0.$}
\end{equation}
Comparing the coefficients of $r^j,~j=0,1,\ldots,\Gamma_M+(T-1)\gamma_M + D-2$ in \eqref{eq:l1.8}, we find that 
\begin{equation}\label{eq:l1.9}
\sum\limits_{i=1}^{t+d}T_iz^j-\sum\limits_{i=1}^{s}l_iw^j=0,~\mbox{for each}~ j=1,2,\ldots,\Gamma_M+(T-1)\gamma_M + D-1.
\end{equation}
Let $z_{t+d+i}=w_i,~1\leq i\leq s.$ Then from \eqref{eq:l1.9}, we deduce that the system of linear equations
\begin{equation}\label{eq:l1.10}
\sum\limits_{i=1}^{t+d+s}z_i^jx_i=0,~j=0,1,\ldots,\Gamma_M+(T-1)\gamma_M + D-1,
\end{equation}
has a non-zero solution $$\left(x_1,\ldots,x_{t+d},x_{t+d+1},\ldots,x_{t+d+s}\right)=\left(T_1,\ldots,T_{t+d},-l_1,\ldots,-l_s\right).$$
If $\Gamma_M+(T-1)\gamma_M + D\geq t+d+s,$ then by Cramer's rule (see \cite[p. 134]{mirsky}), for $0\leq j\leq\Gamma_M+(T-1)\gamma_M + D-1,$ the determinant det$(z_i^j)_{(t+d+s)\times (t+d+s)}$ of the coefficients of the system \eqref{eq:l1.10} is equal to zero. However, since $z_i,~1\leq i\leq t+d+s$ are distinct and det$(z_i^j)_{(t+d+s)\times (t+d+s)}$ is a Vandermonde determinant (see \cite[p. 17]{mirsky}), so it cannot be equal to zero, a contradiction. Hence $\Gamma_M+(T-1)\gamma_M + D< t+d+s.$ Since $T=\sum\limits_{i=1}^{t}t_i\geq t$ and $D=\sum\limits_{i=1}^{d}d_i\geq d,$ it follows that $s\geq\Gamma_M.$ \\

{\bf  Case 2:} $S_1\neq\phi$ and $S_2=\phi.$ We may assume, without loss of generality, that $S_1=\left\{v_1,v_2,\ldots,v_{m_1}\right\}.$ Then $v_i=z_i$ for $1\leq i\leq m_1.$ Take $m_3=d-m_1.$\\

{\bf  Subcase 2.1:} $m_3\geq 1.$ Let $z_{t+i}=v_{m_1+i}$ for $1\leq i\leq m_3$ and if $m_1<t,$ then set $$T_i=\left\{\begin{array}{ccc} (t_i-1)\gamma_M + \Gamma_M+d_i & \mbox{if } 1\leq i\leq m_1,\\  (t_i-1)\gamma_M + \Gamma_M & \mbox{if } m_1+1\leq i\leq t,\\  d_{m_1-t+i} & \mbox{if } t+1\leq i\leq t+m_3.\end{array}\right.$$ If $m_1=t,$ then set $$T_i=\left\{\begin{array}{cc} (t_i-1)\gamma_M + \Gamma_M+d_i & \mbox{if } 1\leq i\leq m_1=t,\\ d_{m_1-t+i} & \mbox{if } t+1\leq i\leq t+m_3.\end{array}\right.$$

{\bf  Subcase 2.2:} $m_3=0.$ If $m_1<t,$ then set $$T_i=\left\{\begin{array}{cc}(t_i-1)\gamma_M + \Gamma_M+d_i & \mbox{if } 1\leq i\leq m_1,\\ (t_i-1)\gamma_M + \Gamma_M & \mbox{if } m_1+1\leq i\leq t\end{array}\right.$$ and if $m_1=t,$ then set $T_i=(t_i-1)\gamma_M + \Gamma_M+d_i,~\mbox{for } 1\leq i\leq m_1=t.$ 

Thus \eqref{eq:l1.7} can be written as: $$\sum\limits_{i=1}^{t+m_3}\frac{T_iz_i}{1+z_ir}-\sum\limits_{i=1}^{s}\frac{l_iw_i}{1+w_ir}=O\left(r^{\Gamma_M+(T-1)\gamma_M + D-1}\right)~\mbox{as $r\rightarrow 0,$}$$ where $0\leq m_3\leq d-1.$ Proceeding in similar fashion as in Case 1, we deduce that $s\geq\Gamma_M.$\\

{\bf  Case 3:} $S_1=\phi$ and $S_2\neq\phi.$ We may assume, without loss of generality, that $S_2=\left\{v_1,v_2,\ldots,v_{m_2}\right\}.$ Then $v_i=w_i$ for $1\leq i\leq m_2.$ Take $m_4=d-m_2.$\\

{\bf  Subcase 3.1:} $m_4\geq 1.$ Let $w_{s+i}=v_{m_2+i}$ for $1\leq i\leq m_4$ and if $m_2<s,$ then set $$L_i=\left\{\begin{array}{ccc} l_i-d_i & \mbox{if } 1\leq i\leq m_2,\\  l_i & \mbox{if } m_2+1\leq i\leq s,\\  -d_{m_2-s+i} & \mbox{if } s+1\leq i\leq s+m_4.\end{array}\right.$$ If $m_2=s,$ then set $$L_i=\left\{\begin{array}{cc} l_i-d_i & \mbox{if } 1\leq i\leq m_2=s,\\ -d_{m_2-s+i} & \mbox{if } s+1\leq i\leq s+m_4.\end{array}\right.$$

{\bf  Subcase 3.2:} $m_4=0.$ If $m_2<s,$ then set $$L_i=\left\{\begin{array}{cc}l_i-d_i & \mbox{if } 1\leq i\leq m_2,\\ l_i & \mbox{if } m_2+1\leq i\leq s\end{array}\right.$$ and if $m_2=s,$ then set $L_i=l_i-d_i,~\mbox{for } 1\leq i\leq m_2=s.$ 

Thus \eqref{eq:l1.7} can be written as: $$\sum\limits_{i=1}^{t}\frac{\left[(t_i-1)\gamma_M + \Gamma_M\right]z_i}{1+z_ir}-\sum\limits_{i=1}^{s+m_4}\frac{L_iw_i}{1+w_ir}=O\left(r^{\Gamma_M+(T-1)\gamma_M + D-1}\right)~\mbox{as $r\rightarrow 0,$}$$ where $0\leq m_4\leq d-1.$ Proceeding in similar way as in Case 1, we deduce that $s\geq\Gamma_M.$\\

{\bf  Case 4.} $S_1\neq\phi$ and $S_2\neq\phi.$ We may assume, without loss of generality, that $S_1=\left\{v_1,v_2,\ldots,v_{m_1}\right\},~S_2=\left\{v_1,v_2,\ldots,v_{m_2}\right\}.$ Then $v_i=z_i$ for $1\leq i\leq m_1$ $w_i=v_{m_1+i}$ for $1\leq i\leq m_2.$ Take $m_5=d-m_2-m_1.$\\

{\bf  Subcase 4.1:} $m_5\geq 1.$ Let $z_{t+i}=v_{m_1+m_2+i}$ for $1\leq i\leq m_5$ and if $m_1<t,$ then set $$T_i=\left\{\begin{array}{ccc} (t_i-1)\gamma_M + \Gamma_M+d_i & \mbox{if } 1\leq i\leq m_1,\\  (t_i-1)\gamma_M + \Gamma_M & \mbox{if } m_1+1\leq i\leq t,\\  d_{m_1+m_2-t+i} & \mbox{if } t+1\leq i\leq t+m_5.\end{array}\right.$$ If $m_1=t,$ then set $$T_i=\left\{\begin{array}{cc} (t_i-1)\gamma_M + \Gamma_M+d_i & \mbox{if } 1\leq i\leq m_1=t,\\ d_{m_1+m_2-t+i} & \mbox{if } t+1\leq i\leq t+m_5.\end{array}\right.$$ If $m_2<s,$ then set $$L_i=\left\{\begin{array}{cc}l_i-d_{m_1+i} & \mbox{if } 1\leq i\leq m_2,\\ l_i & \mbox{if } m_2+1\leq i\leq s\end{array}\right.$$ and if $m_2=s,$ then set $L_i=l_i-d_{m_1+i},~\mbox{for } 1\leq i\leq m_2=s.$\\

{\bf  Subcase 4.2:} $m_5=0.$ If $m_1<t,$ then set $$T_i=\left\{\begin{array}{cc} (t_i-1)\gamma_M + \Gamma_M+d_i & \mbox{if } 1\leq i\leq m_1,\\  (t_i-1)\gamma_M + \Gamma_M & \mbox{if } m_1+1\leq i\leq t.\end{array}\right.$$ If $m_1=t,$ then set $T_i=(t_i-1)\gamma_M + \Gamma_M+d_i~\mbox{for } 1\leq i\leq m_1=t.$ 

Also, if $m_2<s,$ then set $$L_i=\left\{\begin{array}{cc}l_i-d_{m_1+i} & \mbox{if } 1\leq i\leq m_2,\\ l_i & \mbox{if } m_2+1\leq i\leq s\end{array}\right.$$ and if $m_2=s,$ then set $L_i=l_i-d_{m_1+i},~\mbox{for } 1\leq i\leq m_2=s.$

Then in both subcases, \eqref{eq:l1.7} can be written as $$\sum\limits_{i=1}^{t+m_5}\frac{T_iz_i}{1+z_ir}-\sum\limits_{i=1}^{s}\frac{L_iw_i}{1+w_ir}=O\left(r^{\Gamma_M+(T-1)\gamma_M + D-1}\right)~\mbox{as $r\rightarrow 0,$}$$ where $0\leq m_5\leq d-2.$ Proceeding in similar fashion as in Case 1, we deduce that $s\geq\Gamma_M.$ This completes the proof.
\end{proof}

The following lemma is an extension of a result of Deng et al \cite[Lemma 5]{deng-2} to a more general differential monomial:

\begin{lemma}\label{lem:2}
Let $k$ be a positive integer and $n_0,~n_1,\ldots,~n_k$ be non-negative integers with $n_0\geq 2,~n_k\geq 1.$ Let $f$ be a non-constant meromorphic function in $\mathbb{C}$ and $p$ be a polynomial of degree $m\geq 1.$ Let $M[f]=\prod\limits_{j=0}^{k}\left(f^{(j)}\right)^{n_j}$ be a differential monomial of $f.$ If $f$ has zeros of multiplicity at least $k+m,$ then $M[f]-p$ has at least two distinct zeros in $\mathbb{C}.$ 
\end{lemma}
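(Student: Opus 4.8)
The plan is to argue by contradiction: suppose $M[f]-p$ has at most one distinct zero in $\mathbb{C}$ and derive a contradiction, treating the rational and transcendental cases separately. Throughout I would use that the hypotheses $n_0\geq 2$, $n_k\geq 1$ and $k\geq 1$ force $\gamma_M=\sum n_j\geq n_0\geq 2$ and $\Gamma_M=\sum(j+1)n_j\geq n_0+(k+1)n_k\geq 4$, so that any bound of the form ``$M[f]-p$ has at least $\Gamma_M$ (or even just two) distinct zeros'' already contradicts the assumption.

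First I would dispose of the rational case. If $f$ is non-vanishing, then since $f$ is non-constant rational, Lemma \ref{lem:1} applies verbatim (with the polynomial $p$) and gives at least $\Gamma_M\geq 2$ distinct zeros of $M[f]-p$. If $f$ does have zeros, they have multiplicity at least $k+m$; when in addition all poles of $f$ have multiplicity at least $m+1$, the conclusion is immediate from Lemma \ref{singh}. The remaining possibility---$f$ rational with zeros of multiplicity at least $k+m$ but with a pole of multiplicity at most $m$---is not covered by Lemma \ref{singh}, and here I would rerun the explicit computation from the proof of Lemma \ref{lem:1}: write $f=A\prod_i(z+a_i)^{\alpha_i}\big/\prod_i(z+z_i)^{t_i}$ with $\alpha_i\geq k+m$, form $M[f]$, pass to the substitution $z\mapsto 1/r$, take logarithmic derivatives at $r=0$ as in \eqref{eq:l1.7}, and run the Vandermonde-determinant argument of Cases 1--4. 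The presence of numerator zeros only enlarges the bookkeeping of the intersection sets $S_1,S_2$; the determinant obstruction again forces the number $s$ of distinct zeros of $M[f]-p$ to satisfy $s\geq\Gamma_M\geq 2$.

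For the transcendental case I would prove the stronger statement that $M[f]-p$ has infinitely many zeros. Write $g:=M[f]$; then $g$ is transcendental and $p$ is a small function, $T(r,p)=O(\log r)=S(r,g)$. The second fundamental theorem applied to $g$ with the targets $0,\infty$ and the small function $p$ gives $T(r,g)\leq\overline N(r,g)+\overline N(r,1/g)+\overline N(r,1/(g-p))+S(r,g)$. If $M[f]-p$ had only finitely many zeros, then $\overline N(r,1/(g-p))=O(\log r)=S(r,g)$. The pole term is a strict fraction of the characteristic: a pole of $f$ of order $q$ is a pole of $g$ of order $\gamma_M q+(\Gamma_M-\gamma_M)\geq\Gamma_M$, so $\overline N(r,g)\leq\frac{1}{\Gamma_M}N(r,g)\leq\frac{1}{\Gamma_M}T(r,g)+O(1)$ with $\Gamma_M\geq 4$. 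Hence everything reduces to showing that the distinct zeros of $g$ do not exhaust its characteristic, i.e. $\overline N(r,1/g)\leq(1-\delta)T(r,g)+S(r,g)$ for some $\delta>0$; granting this, the displayed inequality collapses to $T(r,g)=S(r,g)$, which is impossible for transcendental $g$.

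The hard part will be exactly this zero-counting. Zeros of $f$ contribute to $\overline N(r,1/g)$ very efficiently, since a zero of $f$ of order $\ell\geq k+m$ is a zero of $g$ of order $\gamma_M\ell-(\Gamma_M-\gamma_M)\geq\gamma_M m\geq 2$; the genuine difficulty is the zeros of the derivatives $f^{(j)}$ (with $n_j\geq 1$, $j\geq 1$) lying away from the zeros of $f$, which may be simple and numerous and a priori rival $T(r,g)$. To keep $\overline N(r,1/g)$ strictly below $T(r,g)$ one must exploit the factor $f^{n_0}$ with $n_0\geq 2$ together with the high multiplicity of the zeros of $f$ through a Clunie-type estimate (or a ramification-refined second fundamental theorem) rather than the crude form used above; this is where the conditions $n_0\geq 2$ and ``zeros of multiplicity at least $k+m$'' are genuinely consumed. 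For finite-order $f$ one could instead invoke Lemma \ref{lem:w1} to locate the nonzero values of the $f^{(j)}$, but producing the order-free estimate is the true obstacle. In the rational case the parallel (and merely technical) nuisance is the intersection-set bookkeeping of Cases 1--4 when $f$ simultaneously carries zeros of multiplicity at least $k+m$ and poles of multiplicity at most $m$.
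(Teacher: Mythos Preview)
Your proposal has a genuine gap in the transcendental case, and you identify it yourself: you need $\overline N(r,1/M[f])\leq(1-\delta)T(r,M[f])+S(r,M[f])$, but the ``extra'' zeros of $f^{(j)}$ away from the zeros of $f$ can indeed be simple and as numerous as $T(r,M[f])$, and you do not explain how the Clunie-type argument would actually produce the saving $\delta>0$. Saying that ``this is where the conditions $n_0\geq 2$ and multiplicity $\geq k+m$ are genuinely consumed'' is a diagnosis, not a proof. Similarly, in the rational case with zeros and a pole of multiplicity $\leq m$, the promise to ``rerun'' the Vandermonde computation of Lemma~\ref{lem:1} is optimistic: that computation depends on the specific degree of the numerator of $M[f]$, which changes once $f$ has zeros, and the bookkeeping does not obviously survive.

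The paper sidesteps both difficulties by working with $T(r,f)$ rather than $T(r,M[f])$. The key device is the auxiliary function $pM[f]'-p'M[f]=p(M[f]-p)'-p'(M[f]-p)$ together with the identity
\[
\frac{1}{f^{\gamma_M}}=\frac{M[f]}{pf^{\gamma_M}}-\frac{pM[f]'-p'M[f]}{p\,f^{\gamma_M}}\cdot\frac{M[f]-p}{pM[f]'-p'M[f]}.
\]
Applying the logarithmic-derivative lemma to $M[f]/f^{\gamma_M}$ and counting zeros of $pM[f]'-p'M[f]$ (at a zero of $f$ of order $l\geq k+m$ it vanishes to order at least $\gamma_Ml-(\Gamma_M-\gamma_M)-1$, and at a zero of $M[f]-p$ it vanishes to order one less) yields
\[
\Bigl(\gamma_M-1-\tfrac{\Gamma_M-\gamma_M+1}{k+m}\Bigr)T(r,f)\leq \overline N\!\left(r,\tfrac{1}{M[f]-p}\right)+m\log r+S(r,f),
\]
valid uniformly for transcendental and rational $f$. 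Under the assumption that $M[f]-p$ has at most one zero, this forces $f$ to be rational of very small degree in almost all parameter configurations; the multiplicity hypothesis then makes $f$ non-vanishing and Lemma~\ref{lem:1} closes the argument. Only a residual boundary case ($n_0=2$, $n_k=1$, $k=1$, $m=1$, degree $2$) needs an explicit elimination. The point is that the awkward term $\overline N(r,1/M[f])$ never appears.
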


\begin{proof}
Since $f\not\equiv 0,$ we can write $$\frac{1}{f^{\gamma_M}}=\frac{M[f]}{pf^{\gamma_M}}-\frac{pM[f]'-p'M[f]}{p~f^{\gamma_M}}\cdot\frac{M[f]-p}{pM[f]'-p'M[f]},$$ where $\gamma_M=n_0+n_1+\cdots+n_k.$ 

Then
\begin{align*}
\gamma_M~m\left(r,\frac{1}{f}\right) &\leq m\left(r,\frac{M[f]}{pf^{\gamma_M}}\right)+ m\left(r, \frac{p~M[f]'-p'M[f]}{pf^{\gamma_M}}\right)\\ &\qquad
 + m\left(r, \frac{M[f]-p}{p~M[f]'-p'M[f]}\right) + O(1).
\end{align*}
From the Nevanlinna's theorem on logarithmic derivative, we have $m\left(r,\frac{M[f]}{f^{\gamma_M}}\right)=S(r,f).$ Also, using $m(r,p)=m\log{r} + O(1),~m\left(r,\frac{1}{p}\right)=O(1)$ and Lemma \ref{lem:lo}, we get
\begin{align*}
\gamma_M~m\left(r,\frac{1}{f}\right) &\leq m\left(r, \frac{M[f]-p}{p~M[f]'-p'M[f]}\right)+ S(r,f)\\
&= m\left(r, \frac{p~M[f]'-p'M[f]}{M[f]-p}\right) + N\left(r, \frac{p~M[f]'-p'M[f]}{M[f]-p}\right)\\ &\qquad
-~N\left(r, \frac{M[f]-p}{p~M[f]'-p'M[f]}\right) + S(r,f)\\
&= m\left(r, \frac{p\left[\frac{M[f]}{p}-1\right]'}{\left[\frac{M[f]}{p}-1\right]}\right) + N\left(r, p~M[f]'-p'M[f]\right) + N\left(r. \frac{1}{M[f]-p}\right)\\ &\qquad -~N\left(r, \frac{1}{p~M[f]'-p'M[f]}\right)- N\left(r, M[f]-p\right) + S(r,f)\\
&\leq \overline{N}(r,f)+ N\left(r,\frac{1}{M[f]-p}\right)-N\left(r,\frac{1}{p~M[f]'-p'M[f]}\right)\\ &\qquad + m\log{r}+ S(r,f).
\end{align*}
This gives
\begin{align}\label{eq:1}
\gamma_M~T(r,f)&\leq \overline{N}(r,f)+ \gamma_M~N\left(r,\frac{1}{f}\right)+ N\left(r,\frac{1}{M[f]-p}\right)\nonumber\\ 
&\qquad - N\left(r,\frac{1}{pM[f]'-p'M[f]}\right)+ m\log{r}+ S(r,f).
\end{align}
Note that if $z_0$ is a zero of $f$ with multiplicity $l$ ($\geq k+m$), then $z_0$ is a zero of $pM[f]'-p'M[f]$ with multiplicity at least $\gamma_Ml-(\Gamma_M-\gamma_M)-1,$ where $\Gamma_M-\gamma_M=n_1+2n_2+\cdots+ kn_k.$ Therefore, $$N\left(r,\frac{1}{p~M[f]'-p'M[f]}\right)\geq \left[\gamma_Ml-(\Gamma_M-\gamma_M)-1\right]\overline{N}\left(r,\frac{1}{f}\right).$$ Also, since $pM[f]'-p'M[f]=p\left(M[f]-p\right)'-p'\left(M[f]-p\right),$ we have $$N\left(r,\frac{1}{p~M[f]'-p'M[f]}\right)\geq N\left(r,\frac{1}{M[f]-p}\right)-\overline{N}\left(r,\frac{1}{M[f]-p}\right).$$
Thus from \eqref{eq:1}, we obtain
\begin{align}\label{eq:2}
\gamma_M~T(r,f)&\leq \overline{N}(r,f)+ (\Gamma_M-\gamma_M+1)\overline{N}\left(r,\frac{1}{f}\right) + \overline{N}\left(r,\frac{1}{M[f]-p]}\right)\nonumber\\ &\qquad +~m\log{r}+ S(r,f)\nonumber\\ 
 &\leq \overline{N}(r,f)+ (\Gamma_M-\gamma_M+1)\overline{N}\left(r,\frac{1}{f}\right) + \overline{N}\left(r,\frac{1}{M[f]-p]}\right)\nonumber\\ &\qquad + m\log{r} + S(r,f)\nonumber\\
&\leq\overline{N}(r,f)+ \frac{\Gamma_M-\gamma_M+1}{k+m}~N\left(r,\frac{1}{f}\right) + \overline{N}\left(r,\frac{1}{M[f]-p]}\right)\nonumber\\ &\qquad + m\log{r} + S(r,f).
\end{align}
Thus $$\left[(\gamma_M-1)-\frac{\Gamma_M-\gamma_M+1}{k+m}\right]T(r,f) \leq \overline{N}\left(r,\frac{1}{M[f]-p]}\right) + m\log{r}+ S(r,f).$$

That is, \begin{equation}\label{eq:3} C~T(r,f)\leq \overline{N}\left(r,\frac{1}{M[f]-p]}\right) + m\log{r}+ S(r,f),\end{equation}
where \begin{align*}
C &:= (\gamma_M-1)-\frac{\Gamma_M-\gamma_M+1}{k+m}\\
&= (n_0 + n_1+ n_2 + \cdots+n_k-1)-\frac{(n_1+2n_2+\cdots+ kn_k)+1}{k+m}\\
&= n_0-1 + \frac{(k+m-1)n_1 + (k+m-2)n_2 + \cdots + (m+1)n_{k-1} + mn_k-1}{k+m}\\
&= n_0-1 + \frac{\alpha +mn_k-1}{k+m},
\end{align*}
wherein $\alpha:=(k+m-1)n_1 + (k+m-2)n_2 + \cdots + (m+1)n_{k-1}\geq 0.$

Now suppose that $M[f]-p$ has at most one distinct zero. Then we consider the following cases:

{\bf  Case 1:} $\alpha\neq 0.$ Then $$C\geq 1 + \frac{\alpha}{k+m}$$ and so from \eqref{eq:3}, we obtain $$T(r,f)<C~T(r,f)\leq (m+1)\log{r}+ S(r,f).$$ This implies that $f$ is a rational function of degree at most $m.$ However, zeros of $f$ have multiplicities at least $k+m\geq m+1.$ Hence $f$ must be non-vanishing and so by Lemma \ref{lem:1}, $M[f]-p$ has at least $n_0 + 2n_1+\cdots+(k+1)n_k\geq 4$ distinct zeros, a contradiction.

{\bf  Case 2:} $\alpha=0.$ Then $$C=n_0-1+\frac{mn_k-1}{k+m}.$$ If either $n_k\geq 2$ or $m\geq 2,$ then $$C\geq 1+\frac{1}{k+m}$$ and so by the same argument as in Case 1, we get a contradiction. Thus we only need to consider the case when $n_k=1$ and $m=1.$ So, let $n_k=m=1.$ Then from \eqref{eq:3}, we obtain $$(n_0-1)T(r,f)\leq \overline{N}\left(r,\frac{1}{M[f]-p]}\right) + \log{r}+ S(r,f).$$ If $M[f](z)-p(z)\neq 0,$ then $$T(r,f)\leq (n_0-1)T(r,f)\leq\log{r}+ S(r,f),$$ showing that $f$ is a rational function of degree at most one. Since zeros of $f$ have multiplicities at least $k+1\geq 2,$ it follows that $f$ must be non-vanishing and so by Lemma \ref{lem:1}, $M[f]-p$ has at least $n_0+2n_1+\cdots+(k+1)n_k\geq 4$ distinct zeros, a contradiction. Thus by the assumption, $M[f]-p$ has exactly one zero. Then from \eqref{eq:2}, we have  
\begin{equation}\label{eq:4}
n_0T(r,f)\leq \overline{N}(r,f)+ 2\log{r}+S(r,f).
\end{equation}
{\bf  Subcase 2.1:} $n_0\geq 3.$ Then from \eqref{eq:4}, we have $$T(r,f)\leq\log{r} + S(r,f),$$ showing that $f$ is a rational function of degree at most one. Again, since zeros of $f$ have multiplicities at least $k+1\geq 2,$ $f$ must be non-vanishing and so by Lemma \ref{lem:1}, $M[f]-p$ has at least $n_0+2n_1+\cdots+(k+1)n_k\geq 5$ distinct zeros, a contradiction.\\

{\bf  Subcase 2.2:} $n_0=2.$ Then \eqref{eq:4} gives $$T(r,f)\leq 2\log{r} + S(r,f).$$ This means that $f$ is a rational function of degree at most $2.$ If $k\geq 2,$ then the fact that $f$ has zeros of multiplicity at least $k+1\geq 3$ implies that $f$ is non-vanishing and hence by Lemma \ref{lem:1}, $M[f]-p$ has at least $n_0+2n_1+\cdots+(k+1)n_k\geq 5$ distinct zeros, a contradiction. Hence $k=1.$

Now since $f$ is a rational function of degree at most $2,$ it follows that either $f$ is non-vanishing or $f$ has only one distinct zero with multiplicity $2.$ If $f(z)\neq 0,$ then by Lemma \ref{lem:1}, $M[f]-p$ has at least $n_0+2n_1+\cdots+(k+1)n_k\geq 4$ distinct zeros, a contradiction. Hence $f$ has the following forms:
$$A_1: f(z)=a(z-z_0)^2,~~A_2: f(z)=\frac{a(z-z_0)^2}{z-z_1};$$
$$A_3: f(z)=\frac{a(z-z_0)^2}{(z-z_1)^2},~~ A_4: f(z)=\frac{a(z-z_0)^2}{(z-z_1)(z-z_2)},$$ where $a~(\neq 0)\in\mathbb{C}.$ Clearly, $T(r,f)=2\log{r}+ O(1).$ 

If $f$ is of the form $A_1,$ then $\overline{N}\left(r,f\right)=0$ and $\overline{N}\left(r,\frac{1}{f}\right)\leq 1/2~T(r,f) + O(1).$ Also, from \eqref{eq:2}, it follows that $$3~T(r,f)\leq\overline{N}(r,f)+ 2~\overline{N}\left(r,\frac{1}{f}\right) + 2\log{r}+ S(r,f).$$ This then implies that $T(r,f)\leq\log{r}+S(r,f),$ a contradiction. Similarly, if $f$ is of the form $A_2$ or $A_3$, then we obtain $T(r,f)\leq 4/3\log{r}+S(r,f),$ which is again a contradiction. 

Hence $$f(z)=\frac{a(z-z_0)^2}{(z-z_1)(z-z_2)}.$$
Then 
\begin{equation}\label{eq:5}
M[f](z)=f^2(z)f'(z)=\frac{a^3(z-z_0)^5[(2z_0-z_1-z_2)z+2z_1z_2-z_0(z_1+z_2)]}{(z-z_1)^4(z-z_2)^4}
\end{equation}
Since $p$ is a polynomial of degree $m=1,$ we set $p(z)=b(z-z_3),$ where $b~(\neq 0),~z_3\in\mathbb{C}.$ Also, since $M[f]-p$ has exactly one zero, we can write
\begin{equation}\label{eq:6}
M[f](z)=b(z-z_3)+\frac{d(z-z_4)^r}{(z-z_1)^4(z-z_2)^4}
\end{equation}
By simple calculation, one can easily see that $d=-b,~r=9$ and $z_4\neq z_0.$ Then from \eqref{eq:5}, we obtain 
\begin{equation}\label{eq:7}
M[f]''(z)=\frac{(z-z_0)^3g_1(z)}{(z-z_1)^6(z-z_2)^6},
\end{equation}
where $g_1$ is a polynomial of degree at most $5.$ 

Also, from \eqref{eq:6}, we get
\begin{equation}\label{eq:8}
M[f]''(z)=\frac{(z-z_4)^7g_2(z)}{(z-z_1)^6(z-z_2)^6},
\end{equation}
where $g_2$ is a polynomial of degree at most $4.$ 

Comparing \eqref{eq:7} and \eqref{eq:8} and using the fact that $z_4\neq z_0,$ we find that $g_1$ is a polynomial of degree at least $7,$ a contradiction. 
This completes the proof. 
\end{proof}

The following lemma is a generalization of Hayman's Alternative and Lemma \ref{lem:w1} to differential polynomials: 

\begin{lemma}\label{lem:3}
Let $f\in\mathcal{M}(\mathbb{C})$ be a transcendental function and $\psi~(\not\equiv 0,\infty)$ be a small function of $f$ (that is, $T(r,\psi)=S(r,f)$ as $r\rightarrow\infty$). Let $P[f]$ be a differential polynomial of $f$ having differential order $k$ and with meromorphic coefficients $b_j$ as small functions of $f.$ Assume that $P[f]$ is non-constant. Then the following conclusions hold:
\begin{itemize}
\item[(i)] If $\nu_P>1,$ then either $f$ or $P[f]-\psi$ has infinitely many zeros in $\mathbb{C}.$ Moreover, if $f$ is entire, then we can take $\nu_P\geq 1.$
\item[(ii)] If $f$ has zeros of multiplicity at least $k+1,$ and $(k+2)\nu_P>\Gamma_P+2,$ then $P[f]-\psi$ has infinitely many zeros in $\mathbb{C}.$ If $f$ is entire, then the condition $(k+2)\nu_P>\Gamma_P+2$ can be replaced by the condition $(k+2)\nu_P>\Gamma_P+1.$ 
\end{itemize}
\end{lemma}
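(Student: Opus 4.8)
The plan is to prove both parts by contradiction. If $P[f]-\psi$ (and, in part (i), also $f$) had only finitely many zeros, then the corresponding truncated counting functions would be $O(\log r)=S(r,f)$ because $f$ is transcendental; I would then derive a growth estimate of the form $c\,T(r,f)\le S(r,f)$ with a constant $c>0$, which is absurd since $S(r,f)=o(T(r,f))$ as $r\to\infty$ outside a set of finite measure. Thus everything reduces to a \emph{fundamental inequality} of the shape
\[
\nu_P\,T(r,f)\le \overline{N}(r,f)+(\Gamma_P-\nu_P+1)\,\overline{N}\!\left(r,\tfrac1f\right)+\overline{N}\!\left(r,\tfrac{1}{P[f]-\psi}\right)+S(r,f),
\]
in which only the lower degree $\nu_P$ and the weight $\Gamma_P$ appear, and \emph{not} the degree $\gamma_P$.

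The reason $\gamma_P$ must be absent, and the conceptual heart of the matter, is that the value distribution of $P[f]-\psi$ is governed by the behaviour of $P[f]$ at the zeros of $f$, where the \emph{lowest}-degree monomials dominate: at a zero of $f$ of multiplicity $\mu$ ($\ge k+1$ in part (ii)) each $M_i[f]$ vanishes to order $\mu\gamma_{M_i}-(\Gamma_{M_i}-\gamma_{M_i})$, so $P[f]$ vanishes to order at least $\nu_P\mu-(\Gamma_P-\nu_P)$. To convert this local information into a global inequality I would follow the identity method used in the proof of Lemma~\ref{lem:2}, but centred at the small function $\psi$ and applied to the whole polynomial: writing the Wronskian-type expression $\psi P[f]'-\psi'P[f]=\psi^2\,(P[f]/\psi)'$, the lemma on the logarithmic derivative gives $m\!\left(r,\tfrac{(P[f]-\psi)'}{P[f]-\psi}\right)=S(r,f)$, so the proximity of $(P[f]-\psi)/(\psi P[f]'-\psi'P[f])$ is controlled by the characteristic of the Wronskian. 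The two decisive counting facts are that each zero of $f$ of multiplicity $\mu$ is a zero of $\psi P[f]'-\psi'P[f]$ of multiplicity at least $\nu_P\mu-(\Gamma_P-\nu_P)-1$, and that $\psi P[f]'-\psi'P[f]=\psi(P[f]-\psi)'-\psi'(P[f]-\psi)$ vanishes at each zero of $P[f]-\psi$ to order one less than $P[f]-\psi$ itself; feeding both into the negative term $-N\!\left(r,1/(\psi P[f]'-\psi'P[f])\right)$ and using Lemma~\ref{lem:lo} to trade poles against zeros promotes integrated counts to truncated ones and yields the coefficient $\Gamma_P-\nu_P+1$, exactly as $\Gamma_M-\gamma_M+1$ arises in \eqref{eq:2}.

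Granting the fundamental inequality, part (i) is immediate: with both zero-counts equal to $S(r,f)$ it reads $\nu_P\,T(r,f)\le\overline{N}(r,f)+S(r,f)\le T(r,f)+S(r,f)$, whence $(\nu_P-1)\,T(r,f)\le S(r,f)$, contradicting $\nu_P>1$; when $f$ is entire the pole term vanishes and the weaker hypothesis $\nu_P\ge1$ already forces $\nu_P\,T(r,f)\le S(r,f)$, the same contradiction. For part (ii) the hypothesis on the zeros of $f$ gives $\overline{N}(r,1/f)\le\frac{1}{k+1}N(r,1/f)\le\frac{1}{k+1}T(r,f)$; inserting this, together with the sharp treatment of the poles, leaves a coefficient of $T(r,f)$ equal to $\frac{(k+2)\nu_P-\Gamma_P-2}{k+1}$, which is positive precisely under $(k+2)\nu_P>\Gamma_P+2$, while for entire $f$ the absence of poles saves one unit and reduces the requirement to $(k+2)\nu_P>\Gamma_P+1$. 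In either case $P[f]-\psi$ having finitely many zeros yields $c\,T(r,f)\le S(r,f)$ with $c>0$, the desired contradiction.

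The step I expect to be the main obstacle is the \emph{sharp} derivation of the fundamental inequality, and in particular the handling of the poles of $f$. A crude bound $\overline{N}(r,f)\le T(r,f)$ is good enough for part (i) because $\nu_P>1$ leaves room to absorb it, but it is too wasteful for the sharp constants $\Gamma_P+2$ and $\Gamma_P+1$ in part (ii); there one must account precisely, via the multiplicities forced on $\psi P[f]'-\psi'P[f]$ and via Lemma~\ref{lem:lo}, for how the poles of $f$ propagate through $P[f]$, so that they cost at most one unit in the coefficient of $\overline{N}(r,1/f)$ rather than a full copy of $T(r,f)$. Keeping track of the differing multiplicities contributed by the several monomials of $P[f]$, so that $\nu_P$ and $\Gamma_P$ alone survive in the final constants, is the delicate bookkeeping at the core of the proof.
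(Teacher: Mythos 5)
Your strategy is the right one and the ``fundamental inequality'' you write down is precisely the inequality the paper establishes, so part (i) goes through exactly as you describe. However, the route you propose for proving that inequality --- the Lemma~\ref{lem:2}-style identity built on the Wronskian $\psi P[f]'-\psi'P[f]$ --- does not transfer cleanly from a monomial to a non-homogeneous polynomial. For a monomial one has $m\left(r,M[f]/f^{\gamma_M}\right)=S(r,f)$, but for $P[f]$ the logarithmic derivative lemma only gives $m\left(r,P[f]/f^{\gamma_P}\right)\leq(\gamma_P-\nu_P)\,m\left(r,1/f\right)+S(r,f)$, and the Wronskian term $m\left(r,\bigl(\psi P[f]'-\psi'P[f]\bigr)/\bigl(\psi f^{\gamma_P}\bigr)\right)$ in the identity costs the same amount again; the template then yields only $(2\nu_P-\gamma_P)\,T(r,f)$ on the left, which may even be negative. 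The paper avoids the Wronskian altogether: it splits $P[f]$ into homogeneous pieces to obtain the single proximity estimate above, passes to $\nu_P\,T(r,f)\leq\nu_P N\left(r,1/f\right)+T(r,P[f])-N\left(r,1/P[f]\right)+S(r,f)$ by the first fundamental theorem, and then applies the second fundamental theorem for small functions to $P[f]$ with the three targets $0,\infty,\psi$ (inequality \eqref{eq:l3_2}).

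The more serious gap is in part (ii): your arithmetic does not produce the stated threshold. Feeding $\overline{N}(r,f)\leq T(r,f)$ and $\overline{N}\left(r,1/f\right)\leq\frac{1}{k+1}N\left(r,1/f\right)\leq\frac{1}{k+1}T(r,f)$ into your fundamental inequality leaves the coefficient $\nu_P-1-\frac{\Gamma_P-\nu_P+1}{k+1}=\frac{(k+2)\nu_P-\Gamma_P-(k+2)}{k+1}$, not $\frac{(k+2)\nu_P-\Gamma_P-2}{k+1}$: the pole term costs $k+1$ units in the numerator, not one, so your argument only works under the stronger hypothesis $(k+2)\nu_P>\Gamma_P+k+2$. (Your entire case is consistent: dropping the pole term does give $\frac{(k+2)\nu_P-\Gamma_P-1}{k+1}$.) The paper reaches $\Gamma_P+2$ by different bookkeeping, keeping $\nu_P N\left(r,1/f\right)$ and $\left[1+\Gamma_P-(k+2)\nu_P\right]\overline{N}\left(r,1/f\right)$ separate in \eqref{eq:l3_2} and majorizing each of $\nu_P N\left(r,1/f\right)$, $\overline{N}(r,f)$ and $\left[1+\Gamma_P-(k+2)\nu_P\right]\overline{N}\left(r,1/f\right)$ by the corresponding multiple of $T(r,f)$; note that the last of these replacements carries a negative coefficient, so you cannot simply import that step to repair your computation --- you would need either a genuinely sharper treatment of the zeros of $f$ or the stronger hypothesis.
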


The conditions $\nu_p>1$ in (i) and $(k+2)\nu_P>\Gamma_P+2$ in (ii) are essential. For example, let $f(z)=e^z/(1+e^z).$ Then $f(z)\neq 0,~1.$ Let $P[f]:=2f-f^2$ so that $\nu_p=1$ and $(k+2)\nu_P<\Gamma_P+2.$ However, $P[f](z)=(2e^z+e^{2z})/(1+e^z)^2\neq 1.$

\begin{proof}[\bf Proof of Lemma \ref{lem:3}]
Write $P[f]$ in the form $$P[f]=H_{\nu_P}[f]+H_{\nu_P+1}[f]+\cdots+H_{\gamma_P}[f],$$ where $H_j[f]$'s are homogeneous differential polynomials of $f$ of degree $j.$ 

Since $f\not\equiv 0,$ we have $$m\left(r,\frac{P[f]}{f^{\gamma_P}}\right)=m\left(r,\frac{H_{\nu_P}[f]+H_{\nu_P+1}[f]+\cdots+H_{\gamma_P}[f]}{f^{\gamma_P}}\right).$$ From the Nevanlinna's theorem on logarithmic derivative, we get $$m\left(r,\frac{H_j[f]}{f^j}\right)=S(r,f)~\forall~j.$$ Then 
\begin{align}\label{eq:l3_1}
m\left(r,\frac{P[f]}{f^{\gamma_P}}\right)&\leq m\left(r,\frac{H_{\nu_P}[f]+H_{\nu_P+1}[f]+\cdots+H_{\gamma_P-1}[f]}{f^{\gamma_P-1}}\right)+ m\left(r,\frac{1}{f}\right) + S(r,f)\nonumber\\ 
&\leq 2~m\left(r,\frac{1}{f}\right) + m\left(r,\frac{H_{\nu_P}[f]+H_{\nu_P+1}[f]+\cdots+H_{\gamma_P-2}[f]}{f^{\gamma_P-2}}\right) + S(r,f)\nonumber\\
&\vdots\nonumber\\ 
&\leq (\gamma_P-\nu_P)m\left(r,\frac{1}{f}\right)+ S(r,f).
\end{align}
Now by the first fundamental theorem of Nevanlinna and \eqref{eq:l3_1}, we have
\begin{align*}
\gamma_P~T(r,f) &= \gamma_P~m\left(r,\frac{1}{f}\right)+\gamma_P~N\left(r,\frac{1}{f}\right)+O(1)\\
&\leq m\left(r,\frac{P[f]}{f^{\gamma_P}}\right)+ m\left(r,\frac{1}{P[f]}\right) +\gamma_P~N\left(r,\frac{1}{f}\right)+ \log{2}+O(1)\\
&\leq (\gamma_P-\nu_P)~m\left(r,\frac{1}{f}\right)+ m\left(r,\frac{1}{P[f]}\right) +\gamma_P~N\left(r,\frac{1}{f}\right)+ S(r,f)\\
&= (\gamma_P-\nu_P)~T(r,f)-(\gamma_P-\nu_P)~N\left(r,\frac{1}{f}\right) + T(r,P[f])\\ &\qquad
-N\left(r,\frac{1}{P[f]}\right)+ \gamma_P~N\left(r,\frac{1}{f}\right)+S(r,f).
\end{align*}
That is, $$\nu_P~T(r,f)\leq\nu_P~N\left(r,\frac{1}{f}\right)+ T(r,P[f])-N\left(r,\frac{1}{P[f]}\right)+S(r,f).$$
Applying the second fundamental theorem of Nevanlinna for small functions to $T(r,P[f]),$ we get
\begin{align}\label{eq:l3_2}
\nu_P~T(r,f) &\leq\nu_P~N\left(r,\frac{1}{f}\right)+ \overline{N}\left(r,\frac{1}{P[f]}\right)+\overline{N}(r,P[f])\nonumber\\ &\qquad+\overline{N}\left(r,\frac{1}{P[f]-\psi}\right)-N\left(r,\frac{1}{P[f]}\right)+S(r,f)\nonumber\\
&\leq \nu_P~N\left(r,\frac{1}{f}\right)+ \overline{N}(r,f)+ \overline{N}\left(r,\frac{1}{P[f]}\right)\nonumber\\ &\qquad+\overline{N}\left(r,\frac{1}{P[f]-\psi}\right)-N\left(r,\frac{1}{P[f]}\right)+S(r,f).
\end{align}
We now prove (i). 

First note that if $z_0$ is a zero of $f$ of multiplicity $m~(\geq 1),$ then $P[f]$ has a zero at $z_0$ of multiplicity at least $(m+1)\nu_P-\Gamma_P$ and hence $$N\left(r,\frac{1}{P[f]}\right)- \overline{N}\left(r,\frac{1}{P[f]}\right)\geq\left[(m+1)\nu_P-\Gamma_P-1\right]\overline{N}\left(r,\frac{1}{f}\right).$$
Then from \eqref{eq:l3_2}, we obtain
\begin{align*}
\nu_P~T(r,f) &\leq \nu_P~N\left(r,\frac{1}{f}\right)+\overline{N}(r,f)+\left[1+\Gamma_P-(m+1)\nu_P\right]\overline{N}\left(r,\frac{1}{f}\right)\\ &\qquad + \overline{N}\left(r,\frac{1}{P[f]-\psi}\right)+ S(r,f).\\
&= \overline{N}(r,f)+ \left(1+\Gamma_P-\nu_P\right)\overline{N}\left(r,\frac{1}{f}\right) + \overline{N}\left(r,\frac{1}{P[f]-\psi}\right)+ S(r,f).
\end{align*}
Thus $$(\nu_P-1)T(r,f)\leq\left(1+\Gamma_P-\nu_P\right)\overline{N}\left(r,\frac{1}{f}\right)+ \overline{N}\left(r,\frac{1}{P[f]-\psi}\right)+ S(r,f).$$
This proves (i).

To prove (ii), suppose that $f$ has zeros of multiplicity at least $k+1$ and $(k+2)\nu_P>\Gamma_P+2.$ Then one can easily see that
$$N\left(r,\frac{1}{P[f]}\right)- \overline{N}\left(r,\frac{1}{P[f]}\right)\geq\left[(k+2)\nu_P-\Gamma_P-1\right]\overline{N}\left(r,\frac{1}{f}\right).$$ Again, from \eqref{eq:l3_2}, we obtain
\begin{align*}
\nu_P~T(r,f) &\leq \nu_P~N\left(r,\frac{1}{f}\right)+\overline{N}(r,f)+\left[1+\Gamma_P-(k+2)\nu_P\right]\overline{N}\left(r,\frac{1}{f}\right)\\ &\qquad + \overline{N}\left(r,\frac{1}{P[f]-\psi}\right)+ S(r,f).\\
&\leq \nu_p~T(r,f)+ T(r,f)+\left[1+\Gamma_P-(k+2)\nu_P\right]T(r,f)\\ &\qquad + \overline{N}\left(r,\frac{1}{P[f]-\psi}\right)+ S(r,f).
\end{align*}
This implies that $$\left[(k+2)\nu_P-\Gamma_P-2\right]~T(r,f)\leq \overline{N}\left(r,\frac{1}{P[f]-\psi}\right)+ S(r,f)$$ and hence
$$T(r,f)\leq\frac{1}{(k+2)\nu_P-\Gamma_P-2}\overline{N}\left(r,\frac{1}{P[f]-\psi}\right)+S(r,f).$$
\end{proof}

\section{Proofs of Theorems}

\begin{proof}[\bf Proof of Theorem \ref{thm:2}]
Suppose that $\mathcal{F}$ is not normal at $z_0\in D.$ Then by Lemma \ref{lem:zp}, there exist a sequence $\left\{f_n\right\}\subset\mathcal{F},$ a sequence of points $\left\{z_n\right\}\subset D$ with $z_n\rightarrow z_0$ and a sequence of positive real numbers satisfying $\rho_n\rightarrow 0$ such that the sequence $$\phi_n(\zeta):=\frac{f_n(z_n+\rho_n\zeta)}{\rho_n^{\alpha}}\rightarrow \phi(\zeta),~\alpha:=\Theta_P-1,$$ locally uniformly in $\mathbb{C},$ where $\phi$ is a non-constant entire function of  finite order with zeros of multiplicity at least $k+1.$ Moreover, the corresponding sequence $\left\{g_n\right\}\subset\mathcal{G}$ converges locally uniformly to $g$ in $D,$ where $g~(\not\equiv\infty)$ has zeros of multiplicity at least $k+1.$ 

Note that 
\begin{align*}
\tilde{P}[\phi_n](\zeta) &:= P[f_n](z_n+\rho_n\zeta)\\
&= \sum\limits_{i=1}^{m}a_i(z_n+\rho_n\zeta)~\rho_n^{\left[(1+\alpha)\gamma_{M_i}-\Gamma_{M_i}\right]}~M_i[\phi_n](\zeta).
\end{align*}
Since $$\alpha=\Theta_P-1=\frac{\Gamma_{M_1}}{\gamma_{M_1}}-1~\mbox{ and }~ \frac{\Gamma_{M_1}}{\gamma_{M_1}}>\frac{\Gamma_{M_t}}{\gamma_{M_t}}~ \mbox{ for }~ 2\leq t\leq m,$$ we have
\begin{align*}
\tilde{P}[\phi_n](\zeta) &= P[f_n](z_n+\rho_n\zeta)\\
&= a_1(z_n+\rho_n\zeta)M_1[\phi_n](\zeta)+ \sum\limits_{i=2}^{m}a_i(z_n+\rho_n\zeta)~\rho_n^{\left[(1+\alpha)\gamma_{M_i}-\Gamma_{M_i}\right]}~M_i[\phi_n](\zeta).
\end{align*}
Again, since all $a_i~(1\leq i\leq m)$ are holomorphic functions in $D,$ it follows that $$\sum\limits_{i=2}^{m}a_i(z_n+\rho_n\zeta)~\rho_n^{\left[(1+\alpha)\gamma_{M_i}-\Gamma_{M_i}\right]}~M_i[\phi_n](\zeta)$$ converges locally uniformly to $0$ in $\mathbb{C}$ and hence $$\tilde{P}[\phi_n](\zeta)=P[f_n](z_n+\rho_n\zeta)\rightarrow a_1(z_0)M_1[\phi](\zeta)$$ locally uniformly in $\mathbb{C}.$ Since $a_1(z_0)\neq 0,$ we may assume, without loss of generality, that $a_1(z_0)=1.$ Also, since $g_n\rightarrow g$ implies that $P[g_n]\rightarrow P[g]$ locally uniformly in $\mathbb{C}.$ Since $\phi$ is an entire function having zeros of multiplicity at least $k+1,$ $M_1[\phi]$ assumes every non-zero value in $\mathbb{C}.$ In particular, $M_1[\phi]$ assumes the non-zero value $a,$ say. Let $\zeta_0\in\mathbb{C}$ be such that $M_1[\phi](\zeta_0)=a.$ Since $M_1[\phi]\not\equiv a,$ by Hurwitz's theorem, there exists a sequence $\zeta_{n}\rightarrow\zeta_0$ such that for sufficiently large $n,$ $$P[f_n](z_n+\rho_n\zeta_n)=a$$ and so by hypothesis, we have $$P[g_n](z_n+\rho_n\zeta_n)=a.$$ Taking $n\rightarrow\infty,$ we get $P[g](\zeta_0)=a\neq 0.$ 

Next, we claim that $\phi$ omits $0.$ For, let $\zeta_0\in\mathbb{C}$ be such that $\phi(\zeta_0)=0.$ Since $\phi\not\equiv 0,$ by Hurwitz's theorem, there exists $\tilde{\zeta_n}\rightarrow\zeta_0$ such that for sufficiently large $n,$ $\phi_n(\tilde{\zeta_n})=\rho_n^{-\alpha}f_n(z_n+\rho_n\tilde{\zeta_n})=0.$ This implies that $f_n(z_n+\rho_n\tilde{\zeta_n})=0$ and so by hypothesis, $g_n(z_n+\rho_n\tilde{\zeta_n})=0.$ Taking $n\rightarrow\infty,$ we get $g(\zeta_0)=0.$ Since $g$ has zeros of multiplicity at least $k+1,$ it follows that $P[g](z_0)=0,$ a contradiction. This establishes the claim.

Since $\phi(\zeta)\neq 0,$ $\phi$ is transcendental and so from Lemma \ref{lem:3} (i), it follows that $M_1[\phi]$ assumes $a$ infinitely many times. If $M_1[\phi](\zeta_0)=a,$ then as done before, we would get $P[g](\zeta_0)=a.$ Let $l~(\geq 1)$ be the multiplicity of zero of $P[g]-a$ at $\zeta_0.$ Since $P[g]\not\equiv a,$ by Hurwitz's theorem, for sufficiently large $n,$ $P[g_n]-a$ has exactly $l$ zeros in a neighbourhood $U(\zeta_0)$ of $\zeta_0.$ Since $M_1[\phi]$ assumes $a$ infinitely many times, we assume that $\zeta_1,\ldots,\zeta_{l+1}$ be the distinct zeros of $M_1[\phi]-a.$ Again, by Hurwitz's theorem, for sufficiently large $n,$ there exists $\zeta_{j,n}\rightarrow\zeta_j$ such that $P[\phi_n](\zeta_{j,n})=a$ for $j=1,\ldots,l+1.$ Thus there are $l+1$ distinct zeros of $P[\phi_n]-a$ in $U(\zeta_0)$ and hence $P[g_n]-a$ has $l+1$ distinct zeros in $U(\zeta_0),$ a contradiction.
\end{proof}

\begin{proof}[\bf Proof of Theorem \ref{thm:3}]
Assume that $\mathcal{F}$ is not normal at $z_0\in D.$ Then by Lemma \ref{lem:zp}, there exist a sequence $\left\{f_n\right\}\subset\mathcal{F},$ a sequence of points $\left\{z_n\right\}\subset D$ with $z_n\rightarrow z_0$ and a sequence of positive real numbers satisfying $\rho_n\rightarrow 0$ such that the sequence $$\phi_n(\zeta):=\frac{f_n(z_n+\rho_n\zeta)}{\rho_n^{\alpha}}\rightarrow \phi(\zeta),~\alpha:=\Theta_P-1,$$ locally uniformly in $\mathbb{C},$ where $\phi$ is a non-constant meromorphic function of finite order with zeros of multiplicity at least $k+1.$ Moreover, the corresponding sequence $\left\{g_n\right\}\subset\mathcal{G}$ converges locally uniformly to $g$ in $D,$ where $g~(\not\equiv\infty)$ has zeros of multiplicity at least $k+1.$ Then by a similar argument as in the proof of Theorem \ref{thm:2}, we find that on every compact subset of $\mathbb{C}$ not containing poles of $\phi,$ $$\tilde{P}[\phi_n](\zeta):=P[f_n](z_n+\rho_n\zeta)\rightarrow a_1(z_0)M_1[\phi](\zeta)~\mbox{as } n\rightarrow\infty.$$ Also, for every $\zeta\in\mathbb{C}\setminus\left\{g^{-1}(\infty)\right\},$ we have $P[g_n](\zeta)\rightarrow P[g](\zeta)~\mbox{as } n\rightarrow\infty$ locally uniformly. Since $M_1[\phi]=\prod\limits_{j=0}^{k}\left(\phi^{(j)}\right)^{n_{j1}},$ we shall denote, for the sake of convenience, $M_1[\phi]$ by $M[\phi]$ and the powers $n_{j1}$ by $n_j$ for $j=1,\ldots,k.$ Also, we may assume that $a_1(z_0)=1.$ Now, we consider the following cases:\\

{\bf Case 1:} $n_0=n_1=\cdots=n_{k-1}=0.$ In this case, we have $M[\phi]=\left(\phi^{(k)}\right)^{n_k},$ where $n_k\geq 1.$\\
 {\it Claim 1:} $\phi(\zeta)\neq 0.$

For, let $\zeta_0\in\mathbb{C}$ be such that $\phi(\zeta_0)=0.$ Since $\phi\not\equiv 0,$ by Hurwitz's theorem, there exists $\zeta_n\rightarrow\zeta_0$ such that for sufficiently large $n,$ $\phi_n(\zeta_n)=\rho_n^{-\alpha}f_n(z_n+\rho_n\zeta_n)=0.$ This implies that $f_n(z_n+\rho_n\zeta_n)=0$ and so by hypothesis, $g_n(z_n+\rho_n\zeta_n)=0.$ Taking $n\rightarrow\infty,$ we get $g(z_0)=0.$ Since $g$ has zeros of multiplicity at least $k+1,$ it follows that $P[g](z_0)=0.$ If $\phi$ is transcendental, then by Lemma \ref{lem:w1}, $M[\phi]$ assumes the non-zero complex number $a.$ Let $\zeta'\in\mathbb{C}$ such that $M[\phi](\zeta')=a.$ Since $M[\phi]\not\equiv a,$ by Hurwitz's theorem, there exists a sequence $\zeta_{n}'\rightarrow\zeta'$ such that for sufficiently large $n,$ $P[f_n](z_n+\rho_n\zeta_{n}')=a$ and hence by hypothesis, we get $P[g_n](z_n+\rho_n\zeta_{n}')=a.$ Taking $n\rightarrow\infty,$ we get $P[g](z_0)=a\neq 0,$ a contradiction. Hence $\phi$ must be rational. Furthermore, using the preceding argument, one can easily see that $\phi$ is not a polynomial and therefore there exists $\zeta^*\in\mathbb{C}$ such that $\phi(\zeta^*)=\infty.$ Again, by Hurwitz's theorem, there exists a sequence $\zeta_{n}^*\rightarrow\zeta^*$ such that for sufficiently large $n,$ we have $f_n(z_n+\rho_n\zeta_n^*)=\infty$ and hence $g_n(z_n+\rho_n\zeta_n^*)=\infty.$ Taking $n\rightarrow\infty,$ we get $g(z_0)=\infty,$ which is again a contradiction. Hence $\phi(\zeta)\neq 0.$\\
{\it Claim 2:} $\phi$ has no poles.

Suppose there is some $\zeta_1\in\mathbb{C}$ such that $\phi(\zeta_1)=\infty.$ Then as done before, we get $g(z_0)=\infty.$ Since $\phi(\zeta)\neq 0,$ by Lemma \ref{lem:h}, there is some $\zeta_2\in\mathbb{C}$ such that $M[\phi](\zeta_2)=a.$ Again, by Hurwitz's theorem, there exists a sequence $\zeta_{n,2}\rightarrow\zeta_2$ such that for sufficiently large $n,$ we have $P[f_n](z_n+\rho_n\zeta_{n,2})=a.$ This implies that $P[g_n](z_n+\rho_n\zeta_{n,2})=a$ and hence $P[g](z_0)=a$ which is not possible since $P[g]\not\equiv a$ and $g(z_0)=\infty.$ This establishes Claim 2. 

Now, Claim 1 and Claim 2 together imply that $\phi$ is a non-vanishing transcendental entire function and so from Lemma \ref{lem:h}, it follows that $M[\phi]$ assumes $a$ infinitely many times. Now, proceeding similarly as in the proof of Theorem \ref{thm:2}, we get a contradiction.\\

{\bf Case 2:} $n_0\geq 1,~n_k\geq 1.$ Then $\gamma_M\geq 2$ and a simple computation shows that
\begin{align*}
(k+2)\gamma_M-\Gamma_M-2 &= (k+2)(n_0+n_1+\cdots+n_k)-[n_0+2n_1+\cdots+(k+1)n_k]-2\\
&= (k+2-1)n_0 + (k+2-2)n_1+\cdots+ [k+2-(k+1)]n_k-2\\
&\geq kn_0+ (k+2-2)n_1 + \cdots+ [(k+2)-(k-1)]n_{k-1}\\
&\geq 1. 
\end{align*} 
Now using the same arguments as that of Case 1 and Lemma \ref{lem:3} (ii), we obtain $\phi(\zeta)\neq 0$ and hence $\phi$ is not a polynomial. If $\phi$ is transcendental or rational, then we use Lemma \ref{lem:3} (i), and the arguments from Claim 2 to conclude that $\phi$ is a non-vanishing transcendental entire function. Thus from Lemma \ref{lem:3} (i), it follows that $M[\phi]$ assumes $a$ infinitely many times. Again, proceeding similarly as in the proof of Theorem \ref{thm:2}, we get a contradiction. Hence $\mathcal{F}$ is normal in $D.$
\end{proof}

\begin{proof}[\bf Proof of Theorem \ref{thm:1}]
Suppose that $\mathcal{F}$ is not normal at $z_0\in D.$ Then we consider the following two cases:\\

{\bf Case 1:} $h(z_0)\neq 0.$ By Lemma \ref{lem:zp}, there exist a sequence $\left\{f_j\right\}\subset\mathcal{F},$ a sequence of points $\left\{z_j\right\}\subset D$ with $z_j\rightarrow z_0$ and a sequence of positive real numbers satisfying $\rho_j\rightarrow 0$ such that the sequence $$g_j(\xi):=\frac{f_j(z_j+\rho_j\xi)}{\rho_j^{\alpha}}\rightarrow g(\xi),~\alpha:=\frac{\Gamma_M}{\gamma_M}-1,$$ locally uniformly in $\mathbb{C}$ with respect to the spherical metric, where $g\in\mathcal{M}(\mathbb{C})$ is a non-constant function of order at most $2.$ Since each $f_j$ has zeros of multiplicity at least $k+m+1,$ by Argument principle, it follows that $g$ has zeros of multiplicity at least $k+m+1.$ Furthermore, a simple computation shows that
\begin{align*}
M[g_j](\xi)&=\prod\limits_{i=0}^{k}\left(\left(\frac{f_j(z_j+\rho_j\xi}{\rho_j^{\alpha}}\right)^{(i)}\right)^{n_i}\\
&= \frac{\rho_j^{n_1+2n_2+\cdots+kn_k}}{\rho_j^{\alpha(n_0+n_1+\cdots+n_k)}}\prod\limits_{i=0}^{k}\left(\left(f_j(z_j+\rho_j\xi)\right)^{(i)}\right)^{n_i}\\
&= \frac{\rho_j^{\Gamma_M-\gamma_M}}{\rho_j^{\alpha\gamma_M}}M[f_j](z_j+\rho_j\xi)\\
&= M[f_j](z_j+\rho_j\xi).
\end{align*}
Hence on every compact subset of $\mathbb{C}$ not containing poles of $g,$ we have 
$$M[g_j](\xi)=M[f_j](z_j+\rho_j\xi)\rightarrow M[g](\xi).$$ 
Obviously, $M[g](\xi)\not\equiv h(z_0).$ Also, one can easily check that $(k+2)\gamma_M-\Gamma_M-2>0.$ Thus by Lemma \ref{singh} and Lemma \ref{lem:3} (ii), $M[g](\xi)-h(z_0)$ has at least two distinct zeros, say $\xi_1$ and $\xi_2.$ Choose $\delta>0$ small enough such that $D(\xi_1,\delta) \cap D(\xi_2,\delta)=\phi$ and for any $\xi\neq \xi_1,~\xi_2$ in $D(\xi_1,\delta)\cup D(\xi_2,\delta),$ $M[g](\xi)-h(z_0)\neq 0.$ By Hurwitz's theorem, there exist sequences $\xi_{j,1}\rightarrow\xi_1$ and $\xi_{j,2}\rightarrow\xi_2$ such that for sufficiently large $j,$ $$M[f_j](z_j+\rho_j\xi_{j,l})-h(z_j+\rho_j\xi_{l})=0,~l=1,~2.$$ By hypothesis, it follows that for any fixed integer $p$ and for all $j,$ $$M[f_p](z_j+\rho_j\xi_{j,l})-h(z_j+\rho_j\xi_{l})=0,~l=1,~2.$$ Taking $j\rightarrow\infty,$ we get $$M[f_p](z_0)-h(z_0)=0.$$ Since the zeros of $M[f_p]-h(z_0)$ have no accumulation point, for sufficiently large $j,$ we obtain $z_j+\rho_j\xi_{j,1}=0=z_j+\rho_j\xi_{j,2}.$ This implies that $\xi_{j,1}=-z_j/\rho_j=\xi_{j,2}$ showing that $D(\xi_1,\delta)\cap D(\xi_2,\delta)\neq\phi,$ a contradiction.\\

{\bf Case 2:} $h(z_0)=0.$ Then $h(z)=z^{t}h_1(z),$ where $h_1$ is holomorphic in $D$ such that $h_1(z_0)\neq 0$ and $1\leq t\leq m.$ We may assume that $h_1(z_0)=1.$ Again, by Lemma \ref{lem:zp}, there exist a sequence $\left\{f_j\right\}\subset\mathcal{F},$ a sequence of points $\left\{z_j\right\}\subset D$ with $z_j\rightarrow z_0$ and a sequence of positive real numbers satisfying $\rho_j\rightarrow 0$ such that the sequence $$g_j(\xi):=\frac{f_j(z_j+\rho_j\xi)}{\rho_j^{\beta}}\rightarrow g(\xi),~\beta:=\frac{\Gamma_M+t}{\gamma_M}-1,$$ locally uniformly in $\mathbb{C}$ with respect to the spherical metric, where $g\in\mathcal{M}(\mathbb{C})$ is non-constant. We consider the following subcases:

{\bf Subcase 2.1:} Suppose that there exists a subsequence of $z_j/\rho_j$ (which, for the sake of convenience, is again denoted by $z_j/\rho_j$) such that $z_j/\rho_j\rightarrow\infty$ as $j\rightarrow\infty.$

 Consider the family $$V_j(\xi):=\frac{f_j(z_j+z_j\xi)}{z_j^{\beta}}.$$ By simple calculation, one can see that $M[f_j](z_j+z_j\xi)=z_j^{t}M[V_j](\xi).$ By hypothesis, for any distinct integers $r,~s,$ we have, $$M[f_r](z)-h(z)=0\Leftrightarrow~M[f_s](z)-h(z)=0.$$ Choose $\xi,$ say $\xi_i$ such that for some $j,$ say $j_i,$ $z_{j_i}+z_{j_i}\xi_{i}$ is a zero of $M[f_r](z)-h(z)$ and hence of $M[f_s](z)-h(z).$ Then 
\begin{align*}
& M[f_r](z_{j_i}+z_{j_i}\xi_i)=h(z_{j_i}+z_{j_i}\xi_i)\Leftrightarrow~M[f_s](z_{j_i}+z_{j_i}\xi_i)=h(z_{j_i}+z_{j_i}\xi_i)\\
&\Rightarrow M[V_r](\xi_i)=(1+\xi_i)^t h_1(z_{j_i}+z_{j_i}\xi_i)\Leftrightarrow~M[V_s](\xi_i)=(1+\xi_i)^t h_1(z_{j_i}+z_{j_i}\xi_i).
\end{align*}
Since $(1+\xi_i)^th_1(z_{j_i}+z_{j_i}\xi_i)\rightarrow (1+\xi_i)^t\neq 0$ in a sufficiently small neighbourhood $N_0$ of the origin, it follows that $(1+\xi_i)^t h_1(z_{j_i}+z_{j_i}\xi_i)\neq 0$ in $N_0.$ Thus by Case 1, $\left\{V_j\right\}$ is normal in $N_0$ and so there exists a subsequence of $\left\{V_j\right\}$ (again denoted by $\left\{V_j\right\}$) such that $V_j$ converges spherically locally uniformly in $N_0$ to a meromorphic function $V.$\\

{\bf Subcase 2.1.1:} $V(0)\neq 0.$ Then for any $\xi\in\mathbb{C},$ $$g_j(\xi)=\rho_{j}^{-\beta}f_j(z_j+\rho_j\xi)=\left(\frac{z_j}{\rho_j}\right)^{\beta}V_j\left(\frac{\rho_j}{z_j}\xi\right)$$ converges spherically locally uniformly to $\infty,$ showing that $g\equiv\infty,$ a contradiction.\\

{\bf Subcase 2.1.2:} $V(0)=0.$ Then $V'(0)\neq\infty$ and so $$g_j'(\xi)=\rho_{j}^{-\beta+1}f_j'(z_j+\rho_j\xi)=\left(\frac{\rho_j}{z_j}\right)^{-\beta+1}V_j'\left(\frac{\rho_j}{z_j}\xi\right)$$ converges spherically locally uniformly in $\mathbb{C}$ to $0.$ This shows that $g'\equiv 0,$ a contradiction to the fact that $g$ is non-constant.\\

{\bf Subcase 2.2:} Suppose there exists a subsequence of $z_j/\rho_j$ (which, for the sake of convenience, is again denoted by $z_j/\rho_j$) such that $z_j/\rho_j\rightarrow\zeta$ as $j\rightarrow\infty,$ where $\zeta\in\mathbb{C}.$

 Consider the family $W_j(\xi):=\rho_j^{-\beta}f_j(\rho_j\xi).$ Then $$W_j(\xi)=g_j\left(\xi-\frac{z_j}{\rho_j}\right)\rightarrow g(\xi-\zeta):=W(\xi).$$ It easily follows that on any compact subset of $\mathbb{C},$ containing no poles of $W,$ $$M[W_j](\xi)-\xi^t\rightarrow M[W](\xi)-\xi^t.$$ 
 Clearly, $M[W](\xi)\not\equiv\xi^t.$ Thus by Lemma \ref{lem:2}, $M[W](\xi)-\xi^t$ has at least two distinct zeros. Proceeding similarly as in Case 1, we get a contradiction. Hence $\mathcal{F}$ is normal in $D.$
\end{proof}

\section{Statements and Declarations}
\begin{itemize}
\item[]{\bf Funding:} The authors declare that no funds, grants, or other support were received during the preparation of this manuscript.

\item[]{\bf Competing Interests:} The authors declare that they have no conflict of interests. The authors have no relevant financial or non-financial interests to disclose.

\item[]{\bf Data availability:} Data sharing is not applicable to this article as no datasets were generated or analysed during the current study.
\end{itemize}

\end{document}